\documentclass[12pt]{amsart}
\usepackage[all]{xy}
\usepackage[parfill]{parskip}

\usepackage{verbatim}
\usepackage{color}

\usepackage{amsthm,amsfonts,amssymb,latexsym,bm}
\usepackage{amsmath, amscd, graphicx, latexsym, hyperref, times}
\usepackage{graphicx}
\usepackage[abs]{overpic}
\usepackage{tikz}
\usepackage{tikz-cd}
\usetikzlibrary{arrows, patterns}

\oddsidemargin .25in \theoremstyle{plain}
\theoremstyle{plain}      
\newtheorem{theorem}{Theorem}[section]

\newtheorem{proposition}[theorem]{Proposition}
\newtheorem{lemma}[theorem]{Lemma}

\newtheorem{conjecture}[theorem]{Conjecture}

\theoremstyle{definition} 
\newtheorem{example}[theorem]{Example}

\theoremstyle{remark}     
\newtheorem{remark}[theorem]{Remark}

\allowdisplaybreaks[3]

\newcommand{\dd}{\mathop{}\!\mathrm{d}}
\usepackage{cleveref}
\usepackage{etoolbox}
\crefname{theorem}{Theorem}{Theorems}

\crefname{lemma}{Lemma}{Lemmas}
\AtBeginEnvironment{lemma}{\crefalias{theorem}{lemma}}

\crefname{proposition}{Proposition}{Propositions}
\AtBeginEnvironment{proposition}{\crefalias{theorem}{proposition}}

\crefname{corollary}{Corollary}{Corollaries}
\AtBeginEnvironment{corollary}{\crefalias{theorem}{corollary}}  

\crefname{remark}{Remark}{Remarks}
\AtBeginEnvironment{remark}{\crefalias{theorem}{remark}}

\crefname{conjecture}{Conjecture}{Conjectures}
\AtBeginEnvironment{conjecture}{\crefalias{theorem}{conjecture}}

\crefname{example}{Example}{Examples}
\AtBeginEnvironment{example}{\crefalias{theorem}{example}}

\crefname{definition}{Definition}{Definitions}
\AtBeginEnvironment{definition}{\crefalias{theorem}{definition}}

\crefname{equation}{}{}
\crefname{section}{Section}{Section}
\crefname{figure}{Figure}{Figure}
\usepackage{appendix}
\begin{document}

\title[Sharp rigidity  for quasilinear Liouville equation on manifolds]
{Sharp rigidity for quasilinear Liouville equation on manifolds with nonnegative Ricci curvature}
\author{Xiaohan Cai}
\address{School of Mathematical Sciences, Shanghai Jiao Tong University, Shanghai 200240, China}
\email{xiaohancai@sjtu.edu.cn}

\date{}
\begin{abstract}
We study the quasilinear Liouville equation
\[
-\Delta_n u=e^u
\]
on complete noncompact Riemannian manifolds  with nonnegative Ricci curvature.
Our first  result shows that, if a solution $u$ satisfies the optimal logarithmic lower bound
\[
u(x)\ge -\frac{n^2}{n-1}\log r(x)+o(\log r(x)) \quad \text{as }r(x)\to+\infty,
\]
then the underlying manifold is isometric to the Euclidean space and $u$ is a standard bubble solution. Both the leading coefficient and the remainder term in the assumption are sharp.
The key ingredient in the proof is the connection between the logarithmic lower bound and a sharp upper bound on the total volume of the solution. 

We also formulate a conjecture concerning the interaction between the sub-logarithmic decay of solutions and the underlying geometry, and prove it for $n=2$, as well as for $n\ge 3$ under a strengthened assumption.
\end{abstract}
\maketitle
\section{Introduction}
	The Liouville equation in dimension two
 \begin{align}\label{eq. Liouville}
     -\Delta u=e^{u}
 \end{align}
  is a basic equation that has remained an active research subject for over a century.
 Geometrically, a solution of \cref{eq. Liouville} on $\mathbb{R}^2$ gives rise to a conformal metric $g=e^{u}g_0$ with constant Gaussian curvature $K_g\equiv \frac{1}{2}$.
 Physically, it  also arises in plasticity, nonlinear elasticity, and membrane buckling.
 The classification of entire solutions to it is therefore of fundamental importance.

 In the Euclidean setting, Liouville \cite{Lio1853} obtained an elegant representation formula of all entire solutions 
  in terms of meromorphic functions.
 A landmark classification was later established by Chen and Li \cite{CL91} under the \emph{finite-volume} assumption
 \begin{align*}
     \int_{\mathbb{R}^2}e^{u}<+\infty.
 \end{align*}
 Geometrically,  their result states that
  any finite-volume solution to \cref{eq. Liouville} is the pullback of the round metric on $\mathbb{S}^2$ via stereographic projection.
  This classification result  inspired extensive subsequent work in the field of conformal geometry and PDE, see, e.g., \cite{CL97, Lin98, CK00, CW01, CLO06}.
 Several alternative proofs were subsequently given in \cite{CW94, CK95, JW02,  BLdS04, HW06, LT20}. 
 Classification results  have also been established  under other assumptions, including stability \cite{Far07}, boundedness \cite{EGLX25}, and concavity \cite{BEL23}.
 We refer the reader to the survey \cite{CL24survey} for further details.

 A higher dimensional analogue of the Liouville equation \cref{eq. Liouville} is the $n$-Liouville equation
 \begin{align}\label{eq. n Liouville equation}
     -\Delta_n u=e^{u},
 \end{align}
 where 
 \begin{align*}
     \Delta_n u:=div(|\nabla u|^{n-2}\nabla u)
 \end{align*}
 is a conformally invariant analogue of the Laplace operator in $n$-dimensional space.
 
 Under the \emph{finite-volume} assumption $\int_{\mathbb{R}^n}e^{u}<+\infty,$
 Esposito \cite[Theorem 1.1]{Esp18} classified the solutions to \cref{eq. n Liouville equation}. More explicitly,
 \begin{theorem}[\cite{Esp18}]\label{thm. Esposito 18}
     Let $u\in C^3(\mathbb{R}^n)$ be a solution to the equation
     \begin{align*}
     \begin{cases}
         -\Delta_n u=e^u &\text{in }\mathbb{R}^n,\\
         \int_{\mathbb{R}^n}e^u<+\infty,
     \end{cases}
     \end{align*}
     then the solution is given by bubble solutions, namely
     \begin{align}\label{eq. bubble solution}
         u(x)=\log \frac{c_n\lambda^n}
         {\left(1+\lambda^{\frac{n}{n-1}}|x-x_0|^{\frac{n}{n-1}}\right)^n}
     \end{align}
     for some $\lambda>0$, $x_0\in\mathbb{R}^n$ and $c_n:=n(\frac{n^2}{n-1})^{n-1}$.
 \end{theorem}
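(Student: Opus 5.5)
To prove \cref{thm. Esposito 18}, I will first establish the sharp asymptotic behavior of $u$ at infinity, then identify the solution through an integral identity whose equality case forces radial symmetry.

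\textbf{Step 1: upper bound and decay.} Using $e^u\in L^1(\mathbb{R}^n)$, I will derive a Brezis--Merle type estimate for the $n$-Laplacian (in the style of Aguilar--Peral), which gives $u^+\in L^\infty(\mathbb{R}^n)$. Serrin's local estimates then give $u\in C^{1,\alpha}_{loc}$ with uniform bounds.

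\textbf{Step 2: logarithmic behavior.} Set $\beta:=\int_{\mathbb{R}^n}e^u$. I will compare $u$ with the $n$-harmonic fundamental solution, using the Kichenassamy--V\'eron asymptotic analysis for isolated singularities of $n$-harmonic-type equations applied after a Kelvin-type inversion $x\mapsto x/|x|^2$. This should yield
\[
u(x)=-\gamma\log|x|+O(1),\qquad \gamma=\left(\frac{\beta}{\omega_{n-1}}\right)^{\frac{1}{n-1}},
\]
together with $|x|\,|\nabla u|\to\gamma$, where $\omega_{n-1}$ is the area of the unit sphere. The relation $\gamma^{n-1}\omega_{n-1}=\beta$ comes from integrating the equation over balls, since the flux of $|\nabla u|^{n-2}\nabla u$ through large spheres tends to $\beta$. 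Finiteness of $\beta$ then forces $\gamma>n$ by integrability. This step is the main obstacle: there is no Green representation formula for the quasilinear operator, so the sharp asymptotics $|x||\nabla u|\to\gamma$ require a careful blow-down argument and uniqueness of the limiting $n$-harmonic profile with a point singularity.

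\textbf{Step 3: Pohozaev identity.} I will multiply the equation by $x\cdot\nabla u$ and integrate over $B_R$. Using the asymptotics from Step 2, the boundary terms converge as $R\to\infty$, and letting $R\to\infty$ gives the identity
\[
n\beta=\frac{n-1}{n}\,\omega_{n-1}\gamma^{n}.
\]
Combined with $\beta=\omega_{n-1}\gamma^{n-1}$, this yields $\gamma=\frac{n^2}{n-1}$ and $\beta=\omega_{n-1}\bigl(\frac{n^2}{n-1}\bigr)^{n-1}$, which is exactly the mass of the bubble solution \cref{eq. bubble solution}.

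\textbf{Step 4: symmetry.} I will then show that equality holds in an isoperimetric argument on the superlevel sets $\{u>t\}$. Define $\mu(t):=|\{u>t\}|$ and $k(t):=\int_{\{u>t\}}e^u=\int_{\{u=t\}}|\nabla u|^{n-1}$. By the coarea formula, H\"older's inequality, and the isoperimetric inequality, one obtains a differential inequality for $k$ and $\mu$. Integrating it over $t\in\mathbb{R}$ gives $\beta\ge\omega_{n-1}\bigl(\frac{n^2}{n-1}\bigr)^{n-1}$, with equality only if every superlevel set is a ball and $|\nabla u|$ is constant on each level set. Since Step 3 gives equality, $u$ is radially symmetric about some point $x_0$. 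The equation then reduces to an ODE, and its solutions with the prescribed total mass are exactly the profiles in \cref{eq. bubble solution}, parametrized by $\lambda>0$. A technical point here is that $\{\nabla u=0\}$ could be nonnegligible. I will handle this by working with regular values and citing the $C^{1,\alpha}$ regularity from Step 1 together with the assumption $u\in C^3$.
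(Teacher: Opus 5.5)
\textbf{Overall route and the gap.} Your outline follows the route of \cite{Esp18}. The paper does not prove this statement; it quotes it and summarizes Esposito's argument as asymptotic analysis plus the Pohozaev identity plus the isoperimetric inequality, which is the architecture of your Steps 1--4. Steps 3 and 4 are fine in outline. The relations $n\beta=\frac{n-1}{n}\omega_{n-1}\gamma^n$ and $\beta=\omega_{n-1}\gamma^{n-1}$ do give $\gamma=\frac{n^2}{n-1}$ and the bubble mass. Step 4 is the level-set argument the paper runs with $\mathrm{AVR}=1$ in the proof of \cref{thm. sharp classification}. The genuine gap is in Step 2, and as you have ordered it, it is circular. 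After inversion, $\hat u(y)=u(y/|y|^2)$ solves $-\Delta_n\hat u=|y|^{-2n}e^{\hat u}$ in $B_1\setminus\{0\}$. The Serrin/Kichenassamy--V\'eron isolated-singularity theory needs this right-hand side in $L^p$ near $0$ for some $p>1$. Since $\int_{B_1}(|y|^{-2n}e^{\hat u})^p\,dy=\int_{|x|>1}|x|^{2n(p-1)}e^{pu}\,dx$, this essentially requires an a priori decay $e^{u(x)}\lesssim|x|^{-n-\delta}$ with $\delta>0$. What you have at that stage is $e^u\in L^1$, $u^+\in L^\infty$, and at best $e^{u(x)}=O(|x|^{-n})$ from a rescaled Brezis--Merle bound. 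That only puts the source in $L^1$, exactly the borderline case those results exclude. You derive $\gamma>n$ only as a consequence of the asymptotics, so it cannot feed into them. Moreover, even $u(x)/\log|x|\to-\gamma$ together with $\int e^u<\infty$ would only give $\gamma\ge n$.

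\textbf{A possible repair.} First run your Step 4 inequality: it uses only $\int e^u<\infty$ and $u\to-\infty$ at infinity, and it gives $\beta\ge\omega_{n-1}(\frac{n^2}{n-1})^{n-1}$, i.e.\ $\gamma\ge\frac{n^2}{n-1}>n$. Next, prove that $\lim_{|x|\to\infty}u(x)/\log|x|$ exists. Since $\min_{\partial B_r}u$ is concave in $\log r$ (compare $u$ with $a+b\log|x|$ on annuli), this amounts to $\mathrm{osc}_{\partial B_r}u=o(\log r)$. The multiplicative Harnack inequality for $\sup u-u$ does not give this, and it is where a real blow-down argument must be carried out. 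The limit is then forced to be $-\gamma$ by \cref{lem. upper bound of L1 norm} and \cref{lem. lower bound of L1 norm} with $\mathrm{AVR}=1$. Hence $u\le-(n+\delta)\log|x|$ eventually and the transformed source lies in $L^p$. Only then can you invoke Serrin/Kichenassamy--V\'eron to get $u+\gamma\log|x|=O(1)$ and $\nabla u=-\gamma x/|x|^2+o(|x|^{-1})$.

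\textbf{Smaller points.} Step 3 needs this vector form, i.e.\ $x\cdot\nabla u\to-\gamma$, not just $|x||\nabla u|\to\gamma$, because the boundary term contains $(\partial_\nu u)^2$; it also uses $\gamma>n$ to kill $R\int_{\partial B_R}e^u$. In Step 4, equality in the isoperimetric inequality gives balls, but you still must show they are concentric; constancy of the normal speed $1/|\nabla u|$ on each level sphere does this. Finally, for $n\ge3$ the bubbles are only $C^{1,\frac{1}{n-1}}$ at $x_0$, so the theorem really concerns weak $C^{1,\alpha}$ solutions. Negligibility of $\{\nabla u=0\}$ should therefore come from $-\Delta_n u=e^u>0$, not from $C^3$ regularity.
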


 Esposito's proof relies on a sophisticated asymptotic analysis, together with the Pohozaev identity and the isoperimetric inequality.
 Among other results,
 Ciraolo, Esposito and Li  \cite[Theorem 1.2]{CEL26}  presented  an alternative proof using the P-function method.

 Unlike the Euclidean case, where the classification is complete, on manifolds with nonnegative Ricci curvature both flexibility and rigidity phenomena exist. On the one hand, there are abundant manifolds with nonnegative Ricci curvature admitting solutions with exotic asymptotic behavior;
 on the other hand, this setting retains enough comparison tools and geometric inequalities to force rigidity of the  ambient manifold under suitable asymptotic control of the solution. These competing features make the classification problem both subtle and meaningful.


 In this paper, we investigate the interplay between solutions to the Liouville equation \cref{eq. n Liouville equation} and the geometry of the underlying manifold, exploring both  flexibility and rigidity phenomena.

 Our first main result is a sharp classification result for solutions to the  equation \cref{eq. n Liouville equation} on $n$-dimensional complete manifolds with nonnegative Ricci curvature for all $n\geq 2$.
 \begin{theorem}\label{thm. sharp classification}
     Let $(M^n,g)$, $n\geq 2$, be a complete noncompact manifold with nonnegative Ricci curvature. Assume $u\in C^3(M^n)$ is a solution to
     \begin{align}\label{eq. n Liouville equation on M}
         -\Delta_n u=e^u\quad 
         \text{on }M^n.
     \end{align}
     Assume in addition that
     \begin{align}\label{eq. sharp logarithmic lower bound}
         u(x)\geq -\frac{n^2}{n-1}\log r(x)
         +o(\log r(x)),\quad \forall r\gg 1,
     \end{align}
     where $r$ is the distance function on $(M^n,g)$ to some fixed point $p$. Then $(M^n,g)$ is isometric to the flat Euclidean space $\mathbb{R}^n$ and $u$ is given by \cref{eq. bubble solution}.

     Moreover, the coefficient $-\frac{n^2}{n-1}$ is optimal, in the sense that, for any $\gamma>\frac{n^2}{n-1}$, there exists a non-flat, complete manifold with nonnegative Ricci curvature, admitting a finite-volume solution to  \cref{eq. n Liouville equation on M} satisfying $u(x)\sim -\gamma\log r(x)$ for $r\gg 1$.
 \end{theorem}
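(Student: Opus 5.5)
Our plan is to turn the pointwise lower bound \cref{eq. sharp logarithmic lower bound} into a sharp upper bound on the total volume $\int_M e^u$, and then run a rigidity argument in which Bishop--Gromov and the isoperimetric inequality under $\mathrm{Ric}\ge 0$ can only be sharp if $M$ is Euclidean.

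\textbf{Step 1: volume bound from the lower bound.} Fix $\varepsilon>0$. By \cref{eq. sharp logarithmic lower bound}, $e^{u}\ge r^{-\frac{n^2}{n-1}-\varepsilon}$ for $r\gg1$. Multiply the equation by a cutoff $\phi^n$ supported in $B_{2R}$ and integrate:
\[
\int_{B_R} e^u\le C\int |\nabla u|^{n-1}\phi^{n-1}|\nabla\phi| .
\]
We want $\int_M e^u<\infty$. To bound the right-hand side, compare $u$ with the radial $n$-superharmonic barriers available under $\mathrm{Ric}\ge0$ (Laplacian comparison $\Delta_n$ of functions of $r$). Using $-\Delta_n u>0$, the flux
\[
F(R)=\int_{\partial B_R}|\nabla u|^{n-2}\partial_\nu u
\]
is monotone. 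With $V(R):=\int_{B_R}e^u$ we have $-F(R)=V(R)$. The volume of $\partial B_R$ is at most $n\omega_n\theta R^{n-1}$, where $\theta\le1$ is the asymptotic volume ratio. Hölder then gives
\[
\sup_{\partial B_R}(-\partial_r u)\gtrsim \left(\frac{V(R)}{n\omega_n\theta}\right)^{\frac1{n-1}}R^{-1}.
\]
Integrating in $R$ heuristically gives
\[
u\le -\left(\frac{V}{n\omega_n\theta}\right)^{\frac1{n-1}}\log r+o(\log r).
\]
Combined with the lower bound, this yields
\[
\int_M e^u\le n\omega_n\theta\left(\frac{n^2}{n-1}\right)^{n-1}.
\]
This is the "sharp upper bound on the total volume" mentioned in the abstract. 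To make it rigorous, we would argue by contradiction: if $V(R_0)$ exceeds the threshold, we obtain a growth rate of $-u$ on the whole sphere via a Harnack-type estimate (Cheng--Yau gradient estimates for $n$-superharmonic functions), which contradicts the lower bound. We also need $\theta>0$. The lower bound forces $e^u$ not to be too small, while finiteness of the volume limits the ambient volume growth. We expect this to follow from the same flux inequality, since if $\theta=0$ then the total volume would have to be $0$.

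\textbf{Step 2: rigidity.} With finite volume, we follow Esposito's scheme in \cref{thm. Esposito 18} on manifolds. Consider the superlevel sets $\Omega_t=\{u>t\}$, with $\mu(t)=|\Omega_t|$ and $m(t)=\int_{\Omega_t}e^u$. The coarea formula and the equation give
\[
m(t)=\int_{\{u=t\}}|\nabla u|^{n-1}.
\]
Hölder together with the sharp isoperimetric inequality under $\mathrm{Ric}\ge0$ (Brendle / Agostiniani--Fogagnolo--Mazzieri, with constant $n\omega_n^{1/n}\theta^{1/n}$) yields a differential inequality for $m$ in $t$. Integrating it gives
\[
\int_M e^u\ge n\omega_n\theta\left(\frac{n^2}{n-1}\right)^{n-1}.
\]
Equality with Step 1 forces equality in the isoperimetric inequality for every level set. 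Rigidity in the AFM/Brendle inequality then implies that $M$ is isometric to $\mathbb{R}^n$ and that the level sets are concentric balls. Then $u$ is radial and \cref{thm. Esposito 18} gives \cref{eq. bubble solution}. We expect the main obstacle to be making Step 1 rigorous with only an $o(\log r)$ error and no a priori gradient decay. We would try the P-function approach of \cite{CEL26} to get a gradient bound $|\nabla u|\le C/r$ first.

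\textbf{Step 3: optimality.} Take a rotationally symmetric metric $dr^2+f(r)^2g_{\mathbb{S}^{n-1}}$ with $f(r)=r$ near $0$ and $f(r)=\alpha r$, $\alpha<1$, for large $r$, smoothed so that $f''\le0$ and $f'\le1$ (giving $\mathrm{Ric}\ge0$). Solve the radial ODE
\[
-(f^{n-1}|u'|^{n-2}u')'=f^{n-1}e^u .
\]
Then $-u'\sim (V/(n\omega_n\alpha^{n-1}))^{1/(n-1)}/r$ at infinity. Shooting from $u(0)$ and using scaling, the total volume can be tuned so that the coefficient equals any $\gamma>\frac{n^2}{n-1}$.
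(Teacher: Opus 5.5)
\textbf{Steps 2 and 3.} Your Step 2 is exactly the paper's argument: coarea and H\"older on $\Omega_t=\{u>t\}$, Brendle's sharp isoperimetric inequality, and its equality case. Your Step 3 is a reasonable self-contained substitute for the paper's citation of \cite[Example 3.4]{CMR26}, provided you carry out the continuity argument in the shooting parameter.

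\textbf{The gap in Step 1.} Step 1 is the key ingredient of the theorem, and you leave it heuristic. From $V(R)=-\int_{\partial B_R}|\nabla u|^{n-2}\partial_r u$ and H\"older you only learn that $|\nabla u|$ is large \emph{somewhere} on each sphere. That point moves with $R$, so nothing can be integrated along a ray. For the nonlinear operator $\Delta_n$ on a curved background there is also no spherical-mean identity that turns the flux into a bound on $\min_{\partial B_R}u$.

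Your proposed repairs do not close this:
\begin{itemize}
\item Cheng--Yau type gradient estimates are for positive ($n$-)harmonic functions, and $u$ is not one.
\item Harnack-type estimates for supersolutions only bound $u$ from below (weak Harnack). They cannot spread smallness of $u$ over a whole sphere.
\item A P-function bound $|\nabla u|\le C/r$, even if available with only an $o(\log r)$ remainder, comes with a non-sharp $C$. The rigidity step closes only if the upper bound is exactly $|\mathbb{S}^{n-1}|\theta\left(\frac{n^2}{n-1}\right)^{n-1}$.
\end{itemize}
Also, Bishop--Gromov goes the other way from what you wrote. One has $\mathrm{Area}(\partial B_R)\ge|\mathbb{S}^{n-1}|\theta R^{n-1}$ for all $R$, and only $\mathrm{Area}(\partial B_R)\le|\mathbb{S}^{n-1}|(\theta+\epsilon)R^{n-1}$ for $R\ge R_\epsilon$.

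\textbf{The missing idea.} No pointwise upper bound on $u$ is needed. You only need the one-sided fact that large mass forces $\inf_{\partial B(p,R_2)}u$ to be low, and that is a comparison-principle (capacity) estimate. The paper proceeds as follows:
\begin{itemize}
\item It uses \cref{lem. upper bound of total curvature}: $\int_{B(p,R_1)}e^u\le Cap_n(B(p,R_1),B(p,R_2))\,(\sup_{B(p,R_1)}u-\inf_{\partial B(p,R_2)}u)^{n-1}$.
\item It bounds the capacity using the radial test function built from $A(t)^{-1/(n-1)}$ together with Bishop--Gromov, giving $Cap_n\le|\mathbb{S}^{n-1}|(\theta+\epsilon)(\log R_2-\log R_1)^{1-n}$ for $R_2>R_1\ge R_\epsilon$.
\item It inserts $-\inf_{\partial B(p,R_2)}u\le\gamma\log R_2+o(\log R_2)$ and lets $R_2\to\infty$, then $R_1\to\infty$, then $\epsilon\to0$.
\end{itemize}
This gives finiteness of $\int_M e^u$ and the sharp bound at once, with no a priori integrability. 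The positivity $\theta>0$ then follows as you say.

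\textbf{An alternative fix.} Your remark about comparing $u$ with radial barriers can also be made rigorous, in the manner of \cref{lem. lower bound of L1 norm} (this is \cref{lem. upper bound of L1 norm_second}):
\begin{itemize}
\item For $\lambda>\gamma$, the sets $D_s=\{u+\lambda\log r<s\}$ are bounded and exhaust $M$.
\item On $\partial D_s$, take $G=-\lambda\log r$ and $T=|\nabla_{\partial D_s}u|=|\nabla_{\partial D_s}G|$. The map $a\mapsto(T^2+a^2)^{\frac{n-2}{2}}a$ is monotone, so $|\nabla u|^{n-2}\partial_\nu u\ge|\nabla G|^{n-2}\partial_\nu G$.
\item Since $\Delta_n\log r\le0$, this yields $\int_{D_s}e^u\le\lambda^{n-1}\mathrm{Area}(\partial B(p,R))R^{1-n}$ whenever $\overline{B(p,R)}\subset D_s$.
\end{itemize}
Either way, Step 1 is a flux-versus-oscillation comparison, not a gradient or Harnack estimate.
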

   
 The two-dimensional case has been progressively understood.
 Catino and Monticelli \cite[Theorem 1.6]{CM26} first proved this rigidity phenomenon under the assumption
 \begin{align}\label{eq. CM assumption_two dim}
         \int_M e^u<+\infty,\quad \text{and }
         u(x)\geq -4\log r(x)-2\gamma \log\log r(x),\quad \forall r\gg 1,
 \end{align}
 for some $\gamma\in[0,1)$.  
 This  was later improved by the author and Lai \cite[Theorem 2]{CL24} under a relaxed assumption
 \begin{align}\label{assumption of CL24}
         \int_M e^u<+\infty,\quad \text{and }
         u(x)\geq -4\log r(x)+o(\log r(x)),\quad \forall r\gg 1,
 \end{align}
 In \cite[Theorem 1.2]{CFP25}, Ciraolo, Farina and Polvara further made a breakthrough. By exploiting the P-function method, they obtained the classification result only under an assumption on the lower bound of $u$:
 \begin{align}\label{assumption of CFP}
     u(x)\geq -4\log r(x)-\log F(r(x)),\quad 
     \forall r\gg 1,
 \end{align}
 for some  positive, nondecreasing function $F$ satisfying \footnote{A particular choice of $F$ is $F(t)=\log t$.}
 \begin{align*}
     \int_2^{+\infty}\frac{1}{tF(t)}dt=+\infty.
 \end{align*}
 Later, Ou \cite[Theorem 1.1]{Ou26} slightly relaxed \cref{assumption of CFP}. Very recently, Ciraolo, Farina and Gatti \cite[Corollary 1.1.2]{CFG26} further improved it, allowing for  a power of $F$ as the remainder term:
 \begin{align*}
     u(x)\geq -4\log r(x)-F^\beta(r(x)),\quad 
     \forall r\gg 1,
 \end{align*}
 for some constant $\beta\in(0,1)$.

 For the $n$-dimensional case, 
Sun and Wang \cite[Theorem II]{SW25}, adapting the P-function method initiated by \cite{CFP25}, proved the classification result under the $n$-dimensional analogue assumption of \cref{assumption of CFP}: 
\begin{align*}
    u(x)\geq -\frac{n^2}{n-1}\log r(x)
         -\frac{n}{2(n-1)}\log F(r(x)),\quad \forall r\gg 1,
\end{align*}
Recently, Catino, Monticelli and Roncoroni \cite[Theorem 1.1]{CMR26} further relaxed the remainder term.
They  \cite[Theorem 1.3]{CMR26} also obtained the classification under the $n$-dimensional analogue assumption of \cref{assumption of CL24}:
\begin{align*}
         \int_{M^n} e^u<+\infty,\quad \text{and }
         u(x)\geq -\frac{n^2}{n-1}\log r(x)+o(\log r(x)),\quad \forall r\gg 1,
 \end{align*}

Compared with previous work, the novelty of \cref{thm. sharp classification} is twofold. First, it does not require the integrability of $e^u$. 
Second,  the leading coefficient $-\frac{n^2}{n-1}$ is sharp  and the remainder term $o(\log r(x))$ is the natural borderline for the classification. This constitutes the sharp analogue of \cref{thm. Esposito 18} in the setting of manifolds with nonnegative Ricci curvature.
Even in the two-dimensional case, this result is new, as it removes the finite-volume assumption in \cite{CL24}.

 The proof of \cref{thm. sharp classification} relies on the following sharp volume estimate.
 \begin{proposition}\label{prop. finite volume}
     Let $(M^n,g), n\geq 2$, be a complete noncompact manifold with nonnegative Ricci curvature. Assume $u\in C^3(M^n)$ is a solution to \cref{eq. n Liouville equation on M} satisfying
    \begin{align}\label{eq. logarithmic decay}
        u(x)\geq
        -\gamma\log r(x)
        +o(\log r(x))\quad 
        \text{as }r(x)\to+\infty,
    \end{align}
    for some constant $\gamma>0$.
    Then there is an upper bound of the volume of the solution:
    \begin{align*}
        \int_M e^u
        \leq |\mathbb{S}^{n-1}|\mathrm{AVR}(M^n,g)\, \gamma^{n-1}<+\infty,
    \end{align*}
    where $\mathrm{AVR}(M^n,g)$ is the asymptotic volume ratio defined by
    \begin{align}\label{def. AVR}
        \mathrm{AVR}(M^n,g):=\lim_{r\to+\infty}
        \frac{Vol(B(p,r))}{|\mathbb{B}^n|r^n}
        \in[0,1].
    \end{align}
 \end{proposition}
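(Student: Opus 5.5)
We relate the volume $\int_M e^u$ to the flux of $|\nabla u|^{n-2}\nabla u$ through large geodesic spheres, and compare that flux with the growth of $u$ along radial directions. Set $m(r):=\int_{B(p,r)}e^u$, which is nondecreasing in $r$. Integrating \cref{eq. n Liouville equation on M} over $B(p,r)$ gives, for a.e. $r$,
\begin{align*}
m(r)=-\int_{\partial B(p,r)}|\nabla u|^{n-2}\langle\nabla u,\nabla r\rangle \le \int_{\partial B(p,r)}|\nabla u|^{n-2}(-\partial_r u)_+ .
\end{align*}
The aim is to show that $m(r)$ large forces $u$ to decay faster than $-\gamma\log r$, contradicting \cref{eq. logarithmic decay}.

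\textbf{Step 1 (flux to radial decay).} For $n\ge2$ we have $|\nabla u|^{n-2}(-\partial_r u)\le (-\partial_r u)_+^{\,n-1}$ only in the wrong direction, so we work with the spherical average $\bar u(r):=\frac{1}{|\partial B(p,r)|}\int_{\partial B(p,r)}u$ as the test quantity. By H\"older on $\partial B(p,r)$, with $A(r)=|\partial B(p,r)|$, we get $m(r)\le \int_{\partial B}|\nabla u|^{n-1}$. To pass to a radial derivative, I would instead use the co-area/integration-along-geodesics approach. Fix $r_0$ with $m(r_0)\ge (1-\varepsilon)\int_M e^u$ (if the total is infinite, take $m(r_0)$ huge). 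For $r>r_0$, integrating the equation over the region between spheres, the flux through $\partial B(p,r)$ is at least $m(r_0)$. Then Jensen/H\"older gives
\begin{align*}
m(r_0)\le \Big(\int_{\partial B}(-\partial_r u)_+^{\,n-1}\Big)\quad\text{up to the tangential part,}
\end{align*}
which must be handled; see Step 3.

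\textbf{Step 2 (volume comparison).} By Bishop--Gromov, $A(r)/r^{n-1}$ is nonincreasing with limit $n|\mathbb{B}^n|\mathrm{AVR}=|\mathbb{S}^{n-1}|\mathrm{AVR}$. So $A(r)\ge |\mathbb{S}^{n-1}|\mathrm{AVR}\,r^{n-1}$. The reverse H\"older direction needs an upper bound, and Bishop--Gromov also gives $A(r)\le |\mathbb{S}^{n-1}|r^{n-1}$, which is only sharp up to AVR. Hence the relevant normalization is the averaged quantity
\begin{align*}
\Big(\frac{m(r_0)}{A(r)}\Big)^{\frac1{n-1}}\lesssim -\frac{d}{dr}\,(\text{average of }u)\ \text{ in an } L^{n-1}\text{ sense}.
\end{align*}
Integrating from $r_0$ to $R$ and using $A(r)\sim |\mathbb{S}^{n-1}|\mathrm{AVR}\,r^{n-1}$ for large $r$ yields
\begin{align*}
\bar u(R)\le C-\Big(\frac{m(r_0)}{|\mathbb{S}^{n-1}|\mathrm{AVR}}\Big)^{\frac1{n-1}}(1-o(1))\log R.
\end{align*}
Comparing with $\bar u(R)\ge -\gamma\log R+o(\log R)$, which follows by averaging \cref{eq. logarithmic decay}, gives $m(r_0)\le |\mathbb{S}^{n-1}|\mathrm{AVR}\,\gamma^{n-1}(1+o(1))$. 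Letting $r_0\to\infty$ gives the claim, including finiteness. If $\mathrm{AVR}=0$, the same argument shows $\int e^u=0$, a contradiction, so such solutions cannot exist; that is consistent with the stated bound.

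\textbf{Step 3 (main obstacle).} The delicate point is converting the flux bound, an $L^{n-1}$ bound on $|\nabla u|$ over spheres, into a pointwise-in-$r$ bound on the decay of $\bar u$ without losing constants. Two issues arise. First, $\bar u'$ is not simply the average of $\partial_r u$, because the sphere's mean curvature enters; the sphere may also be nonsmooth at the cut locus. Second, the tangential gradient inflates $|\nabla u|^{n-2}$.

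The first thing I would try is to avoid averages and work along minimal geodesics $\sigma$ from $p$. Polar coordinates give $e^u\,dV=e^u J(r,\theta)\,dr\,d\theta$ with $J/r^{n-1}$ nonincreasing. Consider the radial function $w(r)=\min$ or $\log$-mean of $u$ over directions, and compare $u$ with a radial supersolution built from $m(r_0)$. Concretely, let $v$ solve the ODE $-(A v'^{\,n-1})'$-type relation $A(r)|v'|^{n-2}(-v')=m(r_0)$. By the comparison principle for $\Delta_n$ on $M\setminus B(p,r_0)$, using the Laplacian comparison $\Delta r\le (n-1)/r$, we obtain $u\le v+C$ in the sense needed. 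Since $v(R)\approx -(m(r_0)/(|\mathbb{S}^{n-1}|\mathrm{AVR}))^{1/(n-1)}\log R$, this reproduces the estimate of Step 2. Justifying this comparison, given that the flux equality holds only in an integrated sense and that the distance function is nonsmooth, is where most of the work lies.
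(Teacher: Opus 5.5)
Your proposal has a genuine gap. The decay estimate at the heart of Step 2, $\bar u(R)\le C-\big(m(r_0)/(|\mathbb{S}^{n-1}|\mathrm{AVR})\big)^{\frac{1}{n-1}}(1-o(1))\log R$, is never derived, and the obstacle you postpone to Step 3 is in fact the whole proof. On geodesic spheres the flux does not control radial decay. For $n\ge3$ the density $|\nabla u|^{n-2}(-\partial_r u)$ can be large because of the tangential gradient while $-\partial_r u$ is small. Even for $n=2$ one has $\bar u'(r)=-\frac{m(r)}{A(r)}+\frac{1}{A(r)}\int_{\partial B(p,r)}(u-\bar u)\,\Delta r$, and this mean-curvature term has no sign.

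The comparison argument you propose as a repair runs in the wrong direction. Since $-\Delta_n u=e^u\ge0$, $u$ is a supersolution, and comparison principles only bound supersolutions from \emph{below}; an upper bound $u\le v+C$ would need $-\Delta_n v\ge e^u$. Moreover, the Laplacian comparison $\Delta r\le\frac{n-1}{r}$ turns radially decreasing functions into \emph{sub}solutions, e.g.\ $\Delta_n(-\beta\log r)=-\beta^{n-1}\Delta_n\log r\ge0$. Your $v$ with $A|v'|^{n-2}(-v')=m(r_0)$ satisfies $\Delta_n v=\frac{m(r_0)}{A}\big(\frac{A'}{A}-\Delta r\big)$, which has no sign because $\Delta r$ oscillates around its spherical average $A'/A$. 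Finally, on the unbounded region $M\setminus B(p,r_0)$ such a comparison would require $u\le v+C$ at infinity, which is exactly what you want to prove. So the pointwise bound $u\le v+C$ is unsupported.

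The missing idea is to stop using geodesic spheres as flux surfaces. The paper replaces the flux by an energy estimate. \cref{lem. upper bound of total curvature}, applied with $K=B(p,R_1)$ and $\Omega=B(p,R_2)$, bounds $\int_{B(p,R_1)}e^u$ by $\mathrm{Cap}_n(B(p,R_1),B(p,R_2))\big(\sup_{B(p,R_1)}u-\inf_{\partial B(p,R_2)}u\big)^{n-1}$. The radial test function built from $A^{-\frac{1}{n-1}}$, together with the Bishop--Gromov bound $A(r)\le|\mathbb{S}^{n-1}|(\mathrm{AVR}+\epsilon)r^{n-1}$ for $r\ge R_\epsilon$, gives $\mathrm{Cap}_n\le|\mathbb{S}^{n-1}|(\mathrm{AVR}+\epsilon)\big(\log\frac{R_2}{R_1}\big)^{-(n-1)}$. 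The hypothesis enters only through $-\inf_{\partial B(p,R_2)}u\le\gamma\log R_2+o(\log R_2)$. Sending $R_2\to\infty$, then $R_1\to\infty$ and $\epsilon\to0$, concludes.

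If you want to keep a flux argument, integrate the equation over $U_t=\{u-G<t\}$ with $G=-(\gamma+2\epsilon)\max\{\log r,0\}$; by the hypothesis these sets are bounded and exhaust $M$. On $\partial U_t$ the tangential gradients of $u$ and $G$ agree, so $\partial_\nu u\ge\partial_\nu G$ upgrades to $|\nabla u|^{n-2}\partial_\nu u\ge|\nabla G|^{n-2}\partial_\nu G$, since $s\mapsto(|a|^2+s^2)^{\frac{n-2}{2}}s$ is increasing. Then $\Delta_n\log r\le0$, now used in the right direction, bounds the resulting flux of $G$ by $(\gamma+2\epsilon)^{n-1}A(\rho)\rho^{1-n}$ for any ball $B(p,\rho)\subset U_t$ with $\rho\ge1$. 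As $t\to\infty$ this tends to $(\gamma+2\epsilon)^{n-1}|\mathbb{S}^{n-1}|\mathrm{AVR}$. This is the mirror image of the proof of \cref{lem. lower bound of L1 norm}, i.e.\ the content of \cref{lem. upper bound of L1 norm_second}.
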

  Applying the sharp isoperimetric inequality of Brendle \cite{Bre23} then yields equality in the volume estimates, which forces the manifold to be Euclidean.

  Except for the role played in the proof of \cref{thm. sharp classification}, \cref{prop. finite volume} itself  is of independent interest. It provides a sharp criterion for integrability of the source term in terms of the logarithmic decay of the solution (see also \cref{lem. upper bound of L1 norm}),
  is reminiscent of the classical logarithmic potential estimates in the study of conformally invariant equations on Euclidean spaces, for example, in \cite{CL97, CQY00, MQ21, Li24},
  and extends the link between these two concepts to the setting of manifolds.

  In the second part of this paper, we further study how  this connection interacts with the geometry of the underlying manifold.
  We formulate a conjecture as the converse of \cref{prop. finite volume}, which addresses the geometric consequences of a solution with \emph{non-logarithmic} behavior (see \cref{conj. converse of prop}). 

  With several different arguments, we prove that \cref{conj. converse of prop} is true for $n=2$ (see \cref{thm. converse for two dimensional}), and is true for $n\geq 3$ under a strengthened assumption (see \cref{thm. converse for higher dimensional}). The proof of these results relies on a key technical lemma (\cref{lem. lower bound of L1 norm}), which is also of independent interest.

This paper is organized as follows. In \cref{sec. proof}, we first establish \cref{prop. finite volume} and then complete the proof of \cref{thm. sharp classification}. 
In \cref{sec. discussion}, we formulate the converse of \cref{prop. finite volume} as a conjecture. Then we construct examples to demonstrate the sharpness of \cref{prop. finite volume} and to illustrate the necessity of the assumptions in the conjecture.
Finally, we prove \cref{thm. converse for two dimensional} and \cref{thm. converse for higher dimensional}.

\section{Proof of \cref{thm. sharp classification}}\label{sec. proof}

This section is devoted to proving \cref{thm. sharp classification}. The strategy is as follows.
 On the one hand, \cref{prop. finite volume} shows that a logarithmic decay of the solution implies a sharp upper bound of the total volume.
 On the other hand, the sharp isoperimetric inequality (see \cite{Bre23}) provides a corresponding lower bound of the volume of the solution. Therefore, under the Euclidean sharp  bound \cref{eq. sharp logarithmic lower bound},
 these two estimates force equality in the volume bounds, and the rigidity case of the sharp isoperimetric inequality forces the manifold to be Euclidean.

We  first prove \cref{prop. finite volume}. 
It relies on \cite[Lemma 4.2]{CMR26}:
\begin{lemma}[\cite{CMR26}]\label{lem. upper bound of total curvature}
     Let $\Omega\subset (M^n,g)$ be a smooth compact domain and let $K\Subset \Omega$ be a smooth compact domain. Assume $u\in W^{1,n}(\Omega)\cap L^{\infty}(\Omega)$ satisfies
    \begin{align*}
        -\Delta_nu=f\geq 0
    \end{align*}
    for some nonnegative function $f\in L^1(\Omega)$. Then
    \begin{align}\label{ineq. lower bound of n-subharmonic function}
        \int_K f
        \leq Cap_n(K,\Omega)
        \left(-\inf_{\partial\Omega} u
        +\sup_{ K}u\right)^{n-1},
    \end{align}
    where
    \begin{align*}
        Cap_n(K,\Omega)
        :=\inf\left\{
        \int_{\Omega}|\nabla \varphi|^n:\varphi\in W_0^{1,n}(\Omega), \varphi\geq 1 \text{ on } K.
        \right\}.
    \end{align*}
\end{lemma}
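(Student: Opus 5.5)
The plan is to compare $u$ with the $n$-potential of the part of $f$ carried by $K$, and then bound $\int_K f$ by a duality argument against the capacitary potential of $(K,\Omega)$. Set $a:=\inf_{\partial\Omega}u$, $b:=\sup_K u$ and $\mu:=\int_K f$. We may assume $\mu>0$.

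\emph{Step 1 (auxiliary potential and comparison).} Let $G\in W^{1,n}_0(\Omega)$ solve
\begin{align*}
-\Delta_n G=f\chi_K \quad\text{in }\Omega,\qquad G=0\ \text{on }\partial\Omega .
\end{align*}
If $f\chi_K$ is only in $L^1$, first replace $f$ by $f_k:=\min(f,k)$ and obtain $G_k$ by minimizing $\frac1n\int|\nabla\psi|^n-\int f_k\chi_K\psi$ over $W^{1,n}_0(\Omega)$. Then pass to $k\to\infty$ at the end: both sides of the final inequality for $G_k$ are monotone or stable in $k$.

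The function $a+G$ satisfies
\begin{align*}
-\Delta_n(a+G)=f\chi_K\le f=-\Delta_n u,\qquad a+G=a\le u\ \text{on }\partial\Omega .
\end{align*}
So the weak comparison principle for the $n$-Laplacian in $W^{1,n}\cap L^\infty$ gives $u\ge a+G$ in $\Omega$. To prove it, test the difference of the two equations with $(a+G-u)_+\in W^{1,n}_0$ and use strict monotonicity of $\xi\mapsto|\xi|^{n-2}\xi$. Consequently
\begin{align*}
0\le G\le b-a=:M \quad\text{on }K.
\end{align*}
Here $G\ge0$ also follows from the weak maximum principle.

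\emph{Step 2 (energy of $G$).} Test the equation of $G$ with $G$ itself to get
\begin{align*}
\int_\Omega|\nabla G|^n=\int_K fG\le M\mu .
\end{align*}

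\emph{Step 3 (duality with the capacitary potential).} Let $v$ be the capacitary potential of $(K,\Omega)$, or any admissible $\varphi$ with $\int|\nabla\varphi|^n\le Cap_n(K,\Omega)+\varepsilon$. Truncating, we may take $0\le v\le 1$ and $v=1$ on $K$. Testing the equation of $G$ with $v$ and applying H\"older gives
\begin{align*}
\mu=\int_K f v=\int_\Omega|\nabla G|^{n-2}\nabla G\cdot\nabla v
\le\Big(\int_\Omega|\nabla G|^n\Big)^{\frac{n-1}{n}}\Big(\int_\Omega|\nabla v|^n\Big)^{\frac1n}.
\end{align*}
Inserting Step 2, this becomes
\begin{align*}
\mu\le (M\mu)^{\frac{n-1}{n}}\,Cap_n(K,\Omega)^{\frac1n}.
\end{align*}
Dividing by $\mu^{(n-1)/n}$ and raising to the $n$-th power yields
\begin{align*}
\mu\le Cap_n(K,\Omega)\,M^{n-1},
\end{align*}
which is \cref{ineq. lower bound of n-subharmonic function}.

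\emph{Expected obstacle.} The algebra above is straightforward. The delicate point is justifying Steps 1--2 when $f$ is merely in $L^1$: $G$ need not a priori lie in $W^{1,n}_0$, and the test functions must be admissible. I would handle this with the truncation $f_k$. For each $k$, Steps 1--3 hold verbatim with $\mu_k:=\int_K f_k$, because $f_k\le f$ keeps the comparison $u\ge a+G_k$ valid. This gives $\mu_k\le Cap_n(K,\Omega)\,M^{n-1}$ with $M$ independent of $k$, and monotone convergence $\mu_k\uparrow\mu$ concludes the proof. The boundedness and $W^{1,n}$ regularity of $u$ are exactly what is needed to make $(a+G_k-u)_+$ an admissible test function in the comparison.
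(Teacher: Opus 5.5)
The paper does not prove this lemma. It quotes it from \cite[Lemma 4.2]{CMR26} and uses it as a black box, so there is no in-paper argument to compare yours against. Judged on its own, your proof is correct and self-contained.

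The three steps all go through:
\begin{itemize}
\item comparing $u$ with $a+G_k$, where $-\Delta_n G_k=\min(f,k)\chi_K$ with zero boundary values;
\item the energy bound $\int_\Omega|\nabla G_k|^n=\int_K f_kG_k\le M\mu_k$;
\item testing the equation of $G_k$ against a truncated admissible function for $Cap_n(K,\Omega)$ and applying H\"older.
\end{itemize}
Because $\min(f,k)\chi_K\le f$ uses only $f\ge 0$, the constant $M=\sup_K u-\inf_{\partial\Omega}u$ does not depend on $k$, and monotone convergence finishes the proof.

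Two small points should be made explicit.
\begin{itemize}
\item To test the weak form of $-\Delta_n u=f$ with $(a+G_k-u)_+$ when $f$ is only in $L^1$, that test function must be bounded. This requires $G_k\in L^\infty$, which holds by standard estimates since $f_k\chi_K\in L^\infty$. Alternatively, test with $\min\{(a+G_k-u)_+,T\}$; monotonicity already forces $\nabla(G_k-u)=0$ a.e.\ on $\{0<a+G_k-u<T\}$, and you then let $T\to\infty$.
\item The same comparison gives $u\ge a+G_k\ge a$ in $\Omega$. Hence $M\ge 0$ automatically, and $M^{n-1}$ causes no trouble.
\end{itemize}

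Your route has a real advantage: it never uses smoothness of $\partial\Omega$ or $\partial K$. It only needs $\Omega$ bounded and open, $K\subset\Omega$ compact, and $(\inf_{\partial\Omega}u-u)_+\in W^{1,n}_0(\Omega)$. The last condition holds whenever $u$ is continuous on $\overline\Omega$. Then $0\le(a+G_k-u)_+\le (G_k)_++(a-u)_+$ places the comparison test function in $W^{1,n}_0(\Omega)$.

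This matters for the paper. In the proof of \cref{prop. finite volume} the lemma is applied with $K=\overline{B(p,R_1)}$ and $\Omega=B(p,R_2)$. These are geodesic balls, and on a general manifold with nonnegative Ricci curvature their boundaries need not be smooth because of the cut locus. Your argument covers that case directly, whereas the lemma as stated (for smooth domains) does not. Your argument also carries over verbatim to the $p$-Laplacian, with exponent $p-1$ in place of $n-1$.
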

\begin{proof}[Proof of \cref{prop. finite volume}]
    Fix $0<R_1<R_2$, by \cref{lem. upper bound of total curvature}, we get
    \begin{align}\label{eq. upper bound of total curvature on R1 ball}
        \int_{B(p,R_1)}e^u
        \leq Cap_n(B(p,R_1), B(p,R_2))
        \left(-\inf_{\partial B(p,R_2)} u
        +\sup_{ B(p,R_1)}u\right)^{n-1}.
    \end{align}
    
    In the following, we estimate the two factors of the right hand side of \cref{eq. upper bound of total curvature on R1 ball}.
    Consider the test function
    \begin{align}\label{eq. test function varphi}
        \varphi(x):=\frac{\int_{r(x)}^{R_2}A^{-\frac{1}{n-1}}(t)dt}
        {\int_{R_1}^{R_2}A^{-\frac{1}{n-1}}(t)dt}, \quad 
        x\in B(p,R_2)\setminus B(p,R_1),
    \end{align}
    where 
    \begin{align*}
        A(t):=Area (\partial B(p,t)).
    \end{align*}
    It is straightforward to see that $\varphi|_{\partial B(p,R_1)}=1,\, \varphi|_{\partial B(p,R_2)}=0$ and
    \begin{align*}
        \nabla \varphi(x)
        =\frac{-A^{-\frac{1}{n-1}}(r(x))}{\int_{R_1}^{R_2} A^{-\frac{1}{n-1}}(t)d t}
        \nabla r,\quad
        x\in B(p,R_2)\setminus B(p,R_1).
    \end{align*}
    Therefore we could derive an upper bound of $Cap_n(B(p,R_1), B(p,R_2))$:
    \begin{align}
        Cap_n(B(p,R_1), B(p,R_2))
        \leq& \int_{B(p,R_2)\setminus B(p,R_1)}
        |\nabla \varphi|^n\notag\\
        =&\int_{R_1}^{R_2}
        \left(
        \int_{\partial B(p,r)}\frac{A^{-\frac{n}{n-1}}(r)}
        {\left(\int_{R_1}^{R_2} A^{-\frac{1}{n-1}}(t) d t\right)^n} 
        \right) d r\notag\\
        =&\int_{R_1}^{R_2}
        \frac{A^{-\frac{1}{n-1}}(r)}
        {\left(\int_{R_1}^{R_2} A^{-\frac{1}{n-1}}(t)d t\right)^n} 
        d r\notag\\
        =&
        \left(
        \int_{R_1}^{R_2} A^{-\frac{1}{n-1}}(t)\dd t
        \right)^{-(n-1)}.\label{ineq. upper bound of capacity}
    \end{align}

    By the definition of the asymptotic volume ratio $\mathrm{AVR}(M^n,g)$ and the Bishop-Gromov volume comparison theorem, for any $\epsilon>0$, there exists $R_\epsilon>0$ such that
    \begin{align*}
        \mathrm{AVR}(M^n,g)+\epsilon\geq \frac{A(r)}{|\mathbb{S}^{n-1}|r^{n-1}},\quad \forall r\geq R_\epsilon.
    \end{align*}
    Equivalently,
    \begin{align*}
        |\mathbb{S}^{n-1}|^{\frac{1}{n-1}} 
        A(r)^{-\frac{1}{n-1}}
        \geq (\mathrm{AVR}(M^n,g)+\epsilon)^{-\frac{1}{n-1}}\frac{1}{r},\quad \forall r\geq R_\epsilon.
    \end{align*}
    Integrating this inequality from $R_1$ to $R_2$ yields that, for any $R_2>R_1\geq R_\epsilon$,
    \begin{align*}
        |\mathbb{S}^{n-1}|
        \left(\int_{R_1}^{R_2} A^{-\frac{1}{n-1}}(t)d t\right)^{n-1}
        \geq (\mathrm{AVR}(M^n,g)+\epsilon)^{-1}
        (\log R_2-\log R_1)^{n-1}.
    \end{align*}
    Combining this with \cref{ineq. upper bound of capacity}, we obtain that, for any $R_2>R_1\geq R_\epsilon$,
    \begin{align}\label{eq. ingredient 1}
        Cap_n(B(p,R_1), B(p,R_2))
        \leq 
        |\mathbb{S}^{n-1}|(\mathrm{AVR}(M^n,g)+\epsilon)
        \left(\log R_2-\log R_1\right)^{-(n-1)}.
    \end{align}
    On the other hand, the assumption \cref{eq. logarithmic decay} implies
    \begin{align}\label{eq. ingredient 2}
        -\inf_{\partial B(p,R_2)} u
        +\sup_{ B(p,R_1)}u
        \leq \gamma\log R_2+o(\log R_2)
        +\sup_{ B(p,R_1)}u,\quad 
        \forall R_2\gg 1.
    \end{align}
    
    For any $\epsilon>0$ and $R_2> R_1\geq R_\epsilon$,  inserting \cref{eq. ingredient 1} and \cref{eq. ingredient 2} into \cref{eq. upper bound of total curvature on R1 ball}, we derive
    \begin{align*}
        \int_{B(p,R_1)}e^u
        \leq |\mathbb{S}^{n-1}|
        (\mathrm{AVR}(M^n,g)+\epsilon)
        \left(\frac{\gamma\log R_2+o(\log R_2)+\sup_{B(p,R_1)}u}{\log R_2-\log R_1}\right)^{n-1}.
    \end{align*}
    Letting $R_2$ go to $\infty$, we obtain, for any $\epsilon>0$ and $R_1\geq R_\epsilon$,
    \begin{align*}
    \int_{B(p,R_1)}e^u
    \leq |\mathbb{S}^{n-1}|(\mathrm{AVR}(M^n,g)+\epsilon)
    \gamma^{n-1}.
    \end{align*}
    Finally, letting $R_1$ go to $\infty$ and then letting $\epsilon$ go to zero, we achieve the desired estimate
    \begin{align*}
        \int_M e^u\leq |\mathbb{S}^{n-1}|
        \mathrm{AVR}(M^n,g)\,\gamma^{n-1}<+\infty.
    \end{align*}
\end{proof}
    Inspecting the proof of \cref{prop. finite volume}, we can actually prove the following slightly more general result.
    \begin{lemma}\label{lem. upper bound of L1 norm}
        Let $(M^n,g), n\geq 2$, be a complete noncompact manifold with nonnegative Ricci curvature. Assume $u\in C^3(M^n)$ is a solution to
    \begin{align*}
        -\Delta_n u=f,\quad
        \text{on }M^n,
    \end{align*}
    for some nonnegative function $f\in L^{\infty}_{\text{loc}}$. Assume 
    \begin{align*}
        \liminf_{r(x)\to+\infty}
        \frac{u(x)}{\log r(x)}>-\infty,
    \end{align*}
    Then $f\in L^1(M)$. More precisely,
    \begin{align*}
        \int_M f
        \leq |\mathbb{S}^{n-1}|\mathrm{AVR}(M^n,g)\, \left(
        -\liminf_{r(x)\to+\infty}
        \frac{u(x)}{\log r(x)}
        \right)^{n-1}<+\infty,
    \end{align*}
    where $\mathrm{AVR}(M^n,g)$ is defined in \cref{def. AVR}.
    \end{lemma}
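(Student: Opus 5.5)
The plan is to repeat the proof of \cref{prop. finite volume} verbatim, replacing $e^u$ by $f$ and the constant $\gamma$ by the quantity
\[
\gamma_0:=-\liminf_{r(x)\to+\infty}\frac{u(x)}{\log r(x)}.
\]
A preliminary remark: if $\gamma_0\le 0$, I would reduce to the case $\gamma_0=0$, or to arbitrarily small positive values. Concretely, I would work with an arbitrary $\gamma>\max\{\gamma_0,0\}$ and let $\gamma\downarrow\max\{\gamma_0,0\}$ at the end. By the definition of $\liminf$, for such $\gamma$ we have $u(x)\ge -\gamma\log r(x)$ for all $r(x)$ large. Hence
\[
-\inf_{\partial B(p,R_2)}u\le \gamma\log R_2,\qquad R_2\gg1,
\]
which replaces \cref{eq. ingredient 2}. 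Since $u\in C^3(M^n)$ and $f\in L^\infty_{\mathrm{loc}}$, on each ball $B(p,R_2)$ we have $u\in W^{1,n}\cap L^\infty$ and $f\in L^1$. Strictly speaking, geodesic balls need not be smooth domains. I would handle this exactly as in the proof of \cref{prop. finite volume}, or by applying \cref{lem. upper bound of total curvature} to smooth domains squeezed between balls, which changes nothing in the limits.

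With this in hand, \cref{lem. upper bound of total curvature} gives, for fixed $R_1<R_2$,
\[
\int_{B(p,R_1)}f\le Cap_n(B(p,R_1),B(p,R_2))\Bigl(\gamma\log R_2+\sup_{B(p,R_1)}u\Bigr)^{n-1}.
\]
The capacity bound \cref{eq. ingredient 1} uses only the geometry (Bishop–Gromov and the test function \cref{eq. test function varphi}), so it carries over unchanged. For $R_2>R_1\ge R_\epsilon$ this yields
\[
\int_{B(p,R_1)}f\le|\mathbb{S}^{n-1}|(\mathrm{AVR}+\epsilon)\left(\frac{\gamma\log R_2+\sup_{B(p,R_1)}u}{\log R_2-\log R_1}\right)^{n-1}.
\]

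Next I would take the limits in order. Letting $R_2\to\infty$, with $R_1$ fixed so that $\sup_{B(p,R_1)}u$ is a finite constant, the fraction tends to $\gamma$. Then I would let $R_1\to\infty$ by monotone convergence, since $f\ge0$, then $\epsilon\to0$, and finally $\gamma\downarrow\max\{\gamma_0,0\}$. This gives
\[
\int_M f\le|\mathbb{S}^{n-1}|\,\mathrm{AVR}\,\max\{\gamma_0,0\}^{n-1}.
\]

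The only genuinely new point, and the step I expect to be the main obstacle, is the case $\gamma_0<0$, where the stated bound $\gamma_0^{n-1}$ is not meaningful (for even $n$ it would even be negative). I would show this case is vacuous or trivial. If $\gamma_0<0$, then $u\to+\infty$ along infinity. On the other hand, the argument above with any small $\gamma>0$ gives $\int_M f\le C\gamma^{n-1}\to0$, so $f\equiv0$. Then $u$ is $n$-harmonic and bounded below outside a compact set, hence bounded below on $M$. Liouville-type results for positive $n$-harmonic functions on manifolds with $\mathrm{Ric}\ge0$ (Cheng–Yau-type gradient estimates for $p$-harmonic functions) then force $u$ to be constant, which contradicts $u\to+\infty$. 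Therefore $\gamma_0\ge0$ whenever the hypothesis holds, and the bound reads as stated with $\gamma_0^{n-1}$. If I want to avoid invoking the Liouville theorem, I would simply interpret the right-hand side with $\gamma_0$ replaced by $\max\{\gamma_0,0\}$, which is all the subsequent applications need.
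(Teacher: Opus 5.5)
Your proposal is correct and matches the paper's argument, which simply reruns the proof of \cref{prop. finite volume} with $e^u$ replaced by $f$ and $\gamma$ replaced by $-\liminf_{r(x)\to+\infty} u(x)/\log r(x)$. Your extra treatment of the sign is a point the paper leaves implicit, but it does not need a Liouville theorem: since $f\ge 0$, $u$ is $n$-superharmonic, so the minimum principle gives $\min_{\partial B(p,R)}u\le u(p)$ for every $R$, and hence $\liminf_{r(x)\to+\infty}u(x)/\log r(x)\le 0$ automatically.
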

    \begin{remark}
        The nonnegativity of $f$ in \cref{lem. upper bound of L1 norm} could be removed (see \cref{lem. upper bound of L1 norm_second}).
    \end{remark}

Now we are ready to prove \cref{thm. sharp classification}. The arguments have  appeared in \cite[Proposition 2.2]{CL24}, \cite[Lemma 4.1]{CMR26} and could be traced back to \cite[Lemma 1.1]{CL91}. For the courtesy of readers, we present a detailed proof.

\begin{proof}[Proof of \cref{thm. sharp classification}]
    By \cref{prop. finite volume} and  \cref{eq. sharp logarithmic lower bound}, we know that
    \begin{align}\label{eq. upper bound of volume}
        \int_M e^u\leq 
        |\mathbb{S}^{n-1}| \mathrm{AVR}(M^n,g)
        \left(\frac{n^2}{n-1}\right)^{n-1}.
    \end{align}

    On the other hand, consider 
    \begin{align*}
        \Omega_t:=\{x\in M: u(x)>t\},\quad 
        F(t):=\int_{\Omega_t}e^u.
    \end{align*}
    Then the integrability of $e^u$ over $M$ implies that $Vol(\Omega_t)<+\infty$ for any $t\in\mathbb{R}$.
    It follows from \cref{eq. n Liouville equation on M} and divergence theorem that
    \begin{align*}
        F(t)=-\int_{\Omega_t}\Delta_n u
        =\int_{\partial\Omega_t}|\nabla u|^{n-1}.
    \end{align*}
    By the coarea formula, there holds
    \begin{align*}
        F'(t)=-\int_{\partial\Omega_t}
        \frac{e^u}{|\nabla u|}
        =-e^t\int_{\partial\Omega_t}\frac{1}{|\nabla u|}.
    \end{align*}
    Then the Holder's inequality and the sharp isoperimetric inequality due to \cite[Corollary 1.3]{Bre23} (see also \cite[Theorem 1.1]{BK23} and \cite[Theorem 1.1]{Kri24}), we have
    \begin{align*}
        -\left(F^{\frac{n}{n-1}}\right)'(t)
        =&-\frac{n}{n-1}F^{\frac{1}{n-1}}(t)
        F'(t)
        =\frac{n}{n-1}e^t
        \left(\int_{\partial\Omega_t}|\nabla u|^{n-1}\right)^{\frac{1}{n-1}}
        \int_{\partial\Omega_t}\frac{1}{|\nabla u|}\\
        \geq &\frac{n}{n-1}e^t 
        \text{Area}(\partial\Omega_t)^{\frac{n}{n-1}}\\
        \geq& \frac{n^2}{n-1}
        \left(|\mathbb{S}^{n-1}|\mathrm{AVR}(M^n,g)\right)^{\frac{1}{n-1}}e^t
        Vol(\Omega_t).
    \end{align*}
    Integrating this inequality from $-\infty$ to $+\infty$ yields
    \begin{align*}
        \left(\int_M e^u\right)^{\frac{n}{n-1}}
        \geq& \frac{n^2}{n-1}
        \left(|\mathbb{S}^{n-1}|\mathrm{AVR}(M^n,g)\right)^{\frac{1}{n-1}}
        e^t Vol(\Omega_t)
        \int_{-\infty}^{+\infty}e^t Vol(\Omega_t)dt\\
        =&\frac{n^2}{n-1}
        \left(|\mathbb{S}^{n-1}|\mathrm{AVR}(M^n,g)\right)^{\frac{1}{n-1}}
        \int_M e^u.
    \end{align*}
    Rearranging it, we get
    \begin{align}\label{eq. lower bound of volume}
        \int_M e^u \geq
        |\mathbb{S}^{n-1}| \mathrm{AVR}(M^n,g)
        \left(\frac{n^2}{n-1}\right)^{n-1}.
    \end{align}
    Then \cref{eq. lower bound of volume} and \cref{eq. upper bound of volume} force an equality. Inspecting the proof of \cref{eq. lower bound of volume} shows that
    \begin{align*}
        \text{Area}(\partial\Omega_t)^{\frac{n}{n-1}}
        = \left(|\mathbb{S}^{n-1}|\mathrm{AVR}(M^n,g)\right)^{\frac{1}{n-1}}
        Vol(\Omega_t),\quad 
        \forall t\in\mathbb{R}.
    \end{align*}
    Therefore, the rigidity part of the sharp isoperimetric inequality  forces $(M^n,g)$ to be isometric to the flat Euclidean space $\mathbb{R}^n$ (see  \cite[Theorem 1.2]{Bre23}). Finally, the classification of $u$ follows from \cref{thm. Esposito 18}.

    For the sharpness of the coefficient $-\frac{n^2}{n-1}$ in \cref{eq. sharp logarithmic lower bound}, readers are referred to \cite[Example 3.4]{CMR26}.
    For any $\gamma>\frac{n^2}{n-1}$, they constructed a non-flat complete warped product manifold $(M^n_\gamma,g_\gamma)$ with nonnegative Ricci curvature and a function $u_\gamma$ satisfying
    \begin{align*}
        -\Delta_n u_\gamma=e^{u_\gamma}\quad
        \text{in }M^n_\gamma,
    \end{align*}
    and 
    \begin{align}\label{example. sharpness of leading coefficient}
        \lim_{r(x)\to+\infty}\frac{u_\gamma(x)}{\log r(x)}=-\gamma,\quad 
        \mathrm{AVR}(M^n_\gamma,g_\gamma)
        \in(0,1),\quad 
        \int_{M_\gamma}e^{u_\gamma}
        =|\mathbb{S}^{n-1}|
        \mathrm{AVR}(M^n_\gamma,g_\gamma)
        \gamma^{n-1}.
    \end{align}
    This completes the proof of \cref{thm. sharp classification}.
\end{proof}
\begin{remark}
    By virtue of \cref{lem. upper bound of L1 norm}, the assumption \cref{eq. sharp logarithmic lower bound} in \cref{thm. sharp classification} can be relaxed to  the following slightly weaker condition:
    \begin{align*}
        \liminf_{r(x)\to+\infty}
        \frac{u(x)}{\log r(x)}\geq -\frac{n^2}{n-1}.
    \end{align*}
\end{remark}

 \section{Converse of \cref{prop. finite volume}}\label{sec. discussion}
The proof of \cref{thm. sharp classification} reveals a subtle interplay between the asymptotic behavior of the solution to the Liouville equation \cref{eq. n Liouville equation on M}, the $L^1$ norm of the source term of the equation, and the asymptotic volume ratio of the underlying manifold.
 
 In this section, we further explore this analytic-geometric interaction.
 The examples \cref{example. sharpness of leading coefficient} show that
 the estimate in \cref{prop. finite volume} is sharp. It is therefore natural to ask whether the converse of \cref{prop. finite volume} also holds.
 
 To formulate such a converse, observe that merely negating the logarithmic decay condition \cref{eq. logarithmic decay} is not enough to rule out finiteness of $\int_M e^u$.
 In fact, 
 \cite[Example 4.7]{CMR26} constructs   complete manifolds with nonnegative Ricci curvature and  solutions to \cref{eq. n Liouville equation on M} satisfying
 \begin{align}\label{example vanishing AVR}
     \lim_{r(x)\to+\infty}\frac{u(x)}{\log r(x)}=-\infty,\quad 
     \mathrm{AVR}(M^n,g)=0,\quad 
     \int_M e^u<+\infty,
 \end{align}
 
On the other hand, combining \cref{prop. finite volume} with the lower bound \cref{eq. lower bound of volume} yields the formal inequality
\begin{align*}
    |\mathbb{S}^{n-1}|
    \left(\frac{n^2}{n-1}\right)^{n-1}
    \leq \frac{\int_M e^u}{\mathrm{AVR}(M^n,g)}
    \leq |\mathbb{S}^{n-1}|
    \left(-\liminf_{r(x)\to+\infty}
    \frac{u(x)}{\log r(x)}
    \right)^{n-1}.
\end{align*}
 This motivates the following conjecture, which provides a more complete description of the interplay between the asymptotic behavior of solutions to \cref{eq. n Liouville equation on M} and the asymptotic geometry of the underlying manifold.
 \begin{conjecture}\label{conj. converse of prop}
      Let $(M^n,g), n\geq 2$, be a complete noncompact manifold with nonnegative Ricci curvature. Assume $u\in C^3(M^n)$ is a solution to \cref{eq. n Liouville equation on M}
    satisfying
    \begin{align}\label{eq. sub-logarithmic decay}
        \liminf_{r(x)\to+\infty}
        \frac{u(x)}{\log r(x)}
        =-\infty,
    \end{align}
    then
    \begin{align*}
        \frac{\int_{M^n}e^u}{\mathrm{AVR}(M^n,g)}=+\infty
    \end{align*}
    in the sense that
    \begin{align}\label{two cases in conj}
        \text{either} \int_M e^u<+\infty \text{ and } 
        \mathrm{AVR}(M^n,g)=0,\quad
        \text{or} \int_M e^u=+\infty,
    \end{align}
    where $\mathrm{AVR}(M^n,g)$ is defined in \cref{def. AVR}.
 \end{conjecture}

Both cases in \cref{two cases in conj} can indeed occur.
The example \cref{example vanishing AVR}  falls into the first case. For the second one, \cite[Example 4.6]{CMR26} constructs a one-dimensional solution $u$ to \cref{eq. n Liouville equation on M} on $\mathbb{R}^n$ satisfying 
\begin{align*}
    \int_{\mathbb{R}^n}e^u=+\infty,\quad 
    \text{and }
    u(x_1,\dots,x_n)\sim -a|x_1| \quad 
    \text{as }|x_1|\to +\infty.
\end{align*}
for some constant $a>0$. We note that this solution is uniformly bounded from above on $\mathbb{R}^n$. The classification of such solutions to the Liouville equation \cref{eq. Liouville} on $\mathbb{R}^2$ was achieved by \cite[Theorem 1.6]{EGLX25} via complex-analytic methods, and alternatively by \cite{CLZ26}.

With this example in mind, it is natural to ask whether the second case in \cref{two cases in conj} can be sharpened to $\int_M e^u=+\infty$ and $\mathrm{AVR}(M^n,g)>0$.
However,  the following examples show that such a sharpening can not be true in general.
\begin{example}\label{example mine}
    For $n\geq 2$, we shall construct complete manifold $(M^n,g)$ with nonnegative Ricci curvature, admitting a function $u$ satisfying the equation \cref{eq. n Liouville equation on M} and
    \begin{align*}
        \liminf_{r(x)\to+\infty}\frac{u(x)}{\log r(x)}=-\infty,\quad 
        \int_M e^u=+\infty,\quad 
        \text{and }
        \mathrm{AVR}(M^n,g)=0.
    \end{align*}
    For $n=2$, we regard the flat cylinder as a conformal metric on $\mathbb{R}^2\setminus\{0\}$:
    \begin{align*}
        \left(\mathbb{S}^1\times \mathbb{R}^1, g\right)
        =\left(\mathbb{R}^2\setminus\{0\},\frac{1}{x_1^2+x_2^2}g_0
        \right),
    \end{align*}
    where $g_0$ is the Euclidean metric on $\mathbb{R}^2$ and $x_1,x_2$ are the Cartesian coordinates on it. We consider the family of functions
    \begin{align}\label{solution on cylinder with infinite volume}
        u_t(x_1,x_2):=
        2\log \frac{2e^{x_1}}{1+t^2+2te^{x_1}\cos x_2+e^{2x_1}}
        +\log ({x_1^2+x_2^2})+\log 2,
    \end{align}
    where $t\geq 0$ is a constant. To verify that $u_t$ satisfies the desired properties, let $w_t(x_1,x_2):=u_t(x_1,x_2)
    -\log  ({x_1^2+x_2^2})$. Then it's direct to check that (see also \cite[Theorem 1.6]{EGLX25})
    \begin{align*}
        -\Delta_{g_0} w_t=e^{w_t} \quad 
        \text{in }\mathbb{R}^2,\quad 
        \text{and }\int_{\mathbb{R}^2}e^{w_t}=+\infty.
    \end{align*}
    Therefore, 
    \begin{align*}
        -\Delta_g u_t=-(x_1^2+x_2^2)\Delta_{g_0}(w_t+\log (x_1^2+x_2^2))
        =e^{\log (x_1^2+x_2^2)}e^{w_t}
        =e^{u_t},\\
        \int_{\mathbb{S}^1\times \mathbb{R}^1}e^{u_t}\,\text{d}vol_g
        =\int_{\mathbb{R}^2}
        e^{w_t+\log(x_1^2+x_2^2)}
        \frac{1}{x_1^2+x_2^2}
        \,\text{d}x_1\text{d}x_2
        =\int_{\mathbb{R}^2}e^{w_t}=+\infty,\\
        \liminf_{r(x)\to+\infty}
        \frac{u_t(x)}{\log r(x)}
        =\liminf_{x_1^2+x_2^2\to+\infty}
        \frac{u_t(x_1,x_2)}{\log\log\sqrt{x_1^2+x_2^2}}
        \leq \liminf_{x_1^2\to+\infty}
        \frac{u_t(x_1,0)}{\log \log |x_1|}
        =-\infty.
    \end{align*}

    For $n\geq 3$, we consider the product manifold
    \begin{align*}
        (M^n,g)=
        \left(\mathbb{R}\times\mathbb{R}\times N^{n-2},ds^2+dt^2+g_N\right),
    \end{align*}
    where $(N^{n-2},g_N)$ is a compact manifold with nonnegative Ricci curvature.
    Recall that \cite[Example 4.5]{CMR26} constructed a one-dimensional solution $w(t)$ on $\mathbb{R}$, satisfying
    \begin{align*}
        -(|w'|^{n-2}w')'=e^w\quad \text{on }\mathbb{R},\quad 
        w(t)\sim -a|t|,\quad \text{and }
        \int_{\mathbb{R}}e^{w(t)}\,\text{d}t<+\infty,
    \end{align*}
    for some constant $a>0$. Therefore, the function 
    \begin{align*}
        u(x)=u(s,t,y):=w(t)
    \end{align*}
    is a solution to the equation \cref{eq. n Liouville equation on M} on $\mathbb{R}\times\mathbb{R}\times N$, satisfying
    \begin{align*}
         \int
         _{\mathbb{R}\times\mathbb{R}\times N} e^u
        =\int_N
        \int_{\mathbb{R}}
        \int_{\mathbb{R}}e^{w(t)}
        \dd t\dd s \dd vol_{g_N}
        =+\infty,\quad 
        \liminf_{r(x)\to+\infty}
        \frac{u(x)}{\log r(x)}
        \leq \liminf_{t\to +\infty}
        \frac{w(t)}{\log t}
        =-\infty.       
    \end{align*}
    This completes our examples.
\end{example}

We next present two results addressing \cref{conj. converse of prop}.
First, we prove that the conjecture holds for $n=2$.
\begin{theorem}\label{thm. converse for two dimensional}
    Let $(M^2,g)$, be a complete noncompact surface with nonnegative Gauss curvature. Assume $u\in C^3(M^2)$ is a solution to
    \begin{align*}
        -\Delta u=e^{u},\quad
        \text{on }M^2,
    \end{align*}
    Assume that
    \begin{align*}
        \liminf_{r(x)\to+\infty}
        \frac{u(x)}{\log r(x)}
        =-\infty,
    \end{align*}
    then
    \begin{align*}
        \frac{\int_{M^2}e^u}{\mathrm{AVR}(M^2,g)}=+\infty
    \end{align*}
    in the sense of \cref{two cases in conj}.
\end{theorem}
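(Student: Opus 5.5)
Suppose the conclusion fails. By \cref{two cases in conj}, this means $\int_M e^u<+\infty$ and $\mathrm{AVR}(M^2,g)>0$. I will derive a contradiction with $\liminf u/\log r=-\infty$ by showing that $u\geq -C\log r$ near infinity for some constant $C$. The idea is to pass to conformal coordinates, where the two-dimensional Liouville equation is conformally invariant up to a shift of $u$, and then apply the Chen--Li classification together with the classical asymptotics of complete surfaces with nonnegative curvature.

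\textbf{Step 1: uniformization.} Since $K\geq 0$ and $M$ is complete noncompact, Cohn--Vossen gives $\int_M K\leq 2\pi$. Huber's theorem then gives that $M$ is conformal to $\mathbb{C}$, so $g=e^{2\varphi}|dz|^2$. Moreover the standard relation $\mathrm{AVR}(M^2,g)=1-\frac{1}{2\pi}\int_M K$ holds, so $\beta:=\frac{1}{2\pi}\int_M K\in[0,1)$. Since $\Delta_g=e^{-2\varphi}\Delta_0$, the function $w:=u+2\varphi$ satisfies
\begin{align*}
-\Delta_0 w=e^{w}\quad\text{in }\mathbb{R}^2,\qquad \int_{\mathbb{R}^2}e^{w}\,dz=\int_M e^u\,dvol_g<+\infty.
\end{align*}

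\textbf{Step 2: apply Chen--Li.} By \cref{thm. Esposito 18} with $n=2$ (the Chen--Li classification), $w$ is a bubble. Hence $w(z)=-4\log|z|+O(1)$ as $|z|\to\infty$.

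\textbf{Step 3: asymptotics of the conformal factor.} This is the main obstacle. The factor $\varphi$ solves $-\Delta_0\varphi=Ke^{2\varphi}\geq 0$, with finite total mass $2\pi\beta$. I would write $\varphi$ as the logarithmic potential
\begin{align*}
-\frac{1}{2\pi}\int\log|z-y|\,K e^{2\varphi}(y)\,dy
\end{align*}
plus a harmonic function. The harmonic part should be constant: a nonconstant entire harmonic function would grow at least linearly in some direction, which is incompatible with completeness and $K\geq0$ (Huber/Finn). Then the potential estimates give the two things needed.
\begin{itemize}
\item The upper bound $\varphi(z)\leq -\beta\log|z|+o(\log|z|)$, which is what enters the lower bound for $u$.
\item A lower bound on the distance. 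Integrating $e^{\varphi}$ along rays gives $r(z)\leq |z|^{1-\beta+o(1)}$. Conversely, a curve from $p$ to $z$ must cross the annuli $\{2^k\leq|\zeta|\leq 2^{k+1}\}$, on most of which $\varphi\geq -(\beta+o(1))\log|\zeta|$. This gives $r(z)\geq |z|^{1-\beta-o(1)}$, so $\log r=(1-\beta+o(1))\log|z|$.
\end{itemize}
Only the weak facts $\varphi\leq C+C\log|z|$ and $\log r\geq c\log|z|$ with $c>0$ are really needed, which lowers the burden. Exceptional sets of small logarithmic capacity in the potential estimates can be handled by the usual Brezis--Merle style averaging.

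\textbf{Step 4: conclusion.} Combining the previous steps,
\begin{align*}
u=w-2\varphi\geq -4\log|z|-2\varphi-O(1)\geq -C\log|z|\geq -C'\log r(x)
\end{align*}
for $r\gg1$. Therefore $\liminf u/\log r>-\infty$, which contradicts the hypothesis. This proves that either $\int_M e^u=+\infty$, or $\int_M e^u<\infty$ and $\mathrm{AVR}(M^2,g)=0$.

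In fact the sharp asymptotics of Step 3 yield $u/\log r\to-\frac{4-2\beta}{1-\beta}$. This is consistent with \cref{prop. finite volume} and with the volume identity $\int_M e^u=8\pi$.
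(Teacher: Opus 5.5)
There is a genuine gap, and it is in Step 1, before the ``main obstacle'' you identify. From $\Delta_g=e^{-2\varphi}\Delta_0$, the equation $-\Delta_g u=e^u$ becomes
\[
-\Delta_0 u=e^{2\varphi}e^{u}=e^{w}.
\]
So it is $u$, not $w=u+2\varphi$, whose flat Laplacian equals $e^w$. For $w$ itself, using $-\Delta_0\varphi=Ke^{2\varphi}$, you get
\[
-\Delta_0 w=e^{w}+2Ke^{2\varphi}.
\]
This is the flat Liouville equation only when $K\equiv 0$. The equation is conformally covariant only in its Gauss-curvature form $-\Delta_g u+2K_g=e^u$, and $-\Delta_g u=e^u$ has no $K_g$ term.

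Consequently Chen--Li is not available in Step 2. The claims $w=-4\log|z|+O(1)$ and $\int_M e^u=8\pi$ are unfounded, and in fact false in general. Insert your own asymptotics $u/\log r\to-\frac{4-2\beta}{1-\beta}$ and $\mathrm{AVR}=1-\beta$ into \cref{prop. finite volume}. You get $8\pi=\int_M e^u\le 2\pi(1-\beta)\frac{4-2\beta}{1-\beta}=8\pi-4\pi\beta$, which forces $\beta=0$, i.e.\ $M$ flat. That contradicts the non-flat finite-volume examples with $\mathrm{AVR}\in(0,1)$ quoted after \cref{thm. sharp classification}. Your closing ``consistency check'' actually exposes the error.

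What is missing is exactly the content of \cite[Proposition 2.3]{CL24}, which the paper invokes after the same Cohn-Vossen/Huber reduction. That content is a logarithmic lower bound for the \emph{weighted} equation $-\Delta_0u=e^{2\varphi}e^u=:f\ge 0$ with $\int f=m<\infty$. Here is one way to get it.
\begin{enumerate}
\item Set $v(z)=\frac{1}{2\pi}\int\log\frac{|z-y|}{|y|+1}f(y)\,dy$. Then $h=u+v$ is harmonic and $v\le\frac{m}{2\pi}\log(1+|z|)$, so $u\ge h-\frac{m}{2\pi}\log(1+|z|)$.
\item The real work is showing $h$ is constant. Huber's representation of $\varphi$ gives $\varphi\ge-\beta\log(1+|z|)-C$. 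Hence $\int e^{h}(1+|z|)^{-\frac{m}{2\pi}-2\beta}<\infty$, and the mean value inequality for the subharmonic function $e^h$ gives $h\le C\log(2+|z|)$, so $h$ is constant.
\item The same lower bound on $\varphi$, integrated along curves, gives $r(z)\ge c|z|^{1-\beta}-C$.
\item Combining these yields $u\ge-\frac{1}{2\pi}\frac{m}{\mathrm{AVR}}\log r-C$, the estimate quoted in the paper, which contradicts the hypothesis.
\end{enumerate}
Only lower bounds on $\varphi$ are needed in this route, so the upper bound on $\varphi$ and the exceptional-set discussion in your Step 3 become unnecessary.

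Separately, in Step 1 you must use $\mathrm{AVR}>0$ to exclude the flat cylinder and the flat M\"obius band before concluding that $M$ is conformal to $\mathbb{C}$. The identity $\mathrm{AVR}=1-\frac{1}{2\pi}\int_M K$ holds only in the planar case.
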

\begin{proof}
    After passing to the universal covering, we could assume, without loss of generality, that $(M^2,g)$ is orientable. 
    By the classification results of Cohn-Vossen \cite{C-V35} and Huber \cite{Hub57}, $(M^2,g)$ is either isometric to the flat cylinder $\mathbb{S}^1\times\mathbb{R}^1$, or conformal to $(\mathbb{R}^2,g_0)$.

    In the first case, since $\mathrm{AVR}(\mathbb{S}^1\times\mathbb{R}^1)=0$, the conclusion holds trivially.

    In the latter case, we could write $(M^2,g)$ as $(\mathbb{R}^2,e^{2\phi}g_0)$ for some smooth function $\phi$ on $\mathbb{R}^2$. 
    \begin{itemize}
        \item If $\int_M e^u=+\infty$, then the conclusion holds vacuously.
        \item If $\int_M e^u<+\infty$, then \cite[Proposition 2.3]{CL24} have established that $\mathrm{AVR}(M^2,g)=0$. Roughly speaking, they obtained this after deriving
        \begin{align*}
            u(x)\geq -\frac{1}{2\pi}\frac{\int_M e^u}{\mathrm{AVR}(M^2,g)}\log r(x)-C,\quad 
            \forall r(x)\gg 1.
        \end{align*}
    \end{itemize}
    This completes the proof.
\end{proof}

For $n\geq 3$, we establish a  verification of \cref{conj. converse of prop} under a strengthened assumption.
\begin{theorem}\label{thm. converse for higher dimensional}
    Let $(M^n,g), n\geq 2$, be a complete noncompact manifold with nonnegative Ricci curvature. Assume $u\in C^3(M^n)$ is a solution to \cref{eq. n Liouville equation on M} satisfying
    \begin{align*}
        \limsup_{r(x)\to+\infty}
        \frac{u(x)}{\log r(x)}
        =-\infty,
    \end{align*}
    then
    \begin{align*}
        \frac{\int_{M^n}e^u}{\mathrm{AVR}(M^n,g)}=+\infty
    \end{align*}
    in the sense of \cref{two cases in conj}.
\end{theorem}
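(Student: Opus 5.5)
We argue by contradiction. By \cref{two cases in conj}, it suffices to exclude the case
\[
\int_M e^u<+\infty \quad\text{and}\quad \mathrm{AVR}(M^n,g)>0 .
\]
So assume both hold, and set $F_\infty:=\int_M e^u$. The hypothesis $\limsup_{r\to\infty}u/\log r=-\infty$ means the following: for every $K>0$ there is $R_K$ with
\[
u(x)\le -K\log r(x)\quad\text{whenever } r(x)\ge R_K .
\]
Consider the superlevel sets $\Omega_t=\{u>t\}$. For $t\ll -1$ the estimate above gives $\Omega_t\subset B(p,e^{-t/K})$. Moreover, fixing a ball $B(p,\rho)$ and taking $t_0<\min_{\overline{B(p,\rho)}}u$, we get $B(p,\rho)\subset\Omega_{t}$ for all $t\le t_0$. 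The idea is to trap a capacity between an upper bound coming from the equation and a lower bound coming from the geometry.

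\emph{Upper bound from the equation.} Fix $s<t\le t_0$. The truncation $\varphi=\min\{1,\max\{0,(u-s)/(t-s)\}\}$ is admissible for $Cap_n(\overline{\Omega_t},\Omega_s)$; note that $\Omega_s$ is bounded, so $\varphi\in W^{1,n}_0(\Omega_s)$. By the coarea formula and the flux identity
\[
F(\tau):=\int_{\Omega_\tau}e^u=\int_{\partial\Omega_\tau}|\nabla u|^{n-1},
\]
which is used in the proof of \cref{thm. sharp classification} (a Sard-type argument handles non-regular values), we obtain
\begin{align*}
Cap_n(\overline{\Omega_t},\Omega_s)\le \frac{1}{(t-s)^n}\int_{\Omega_s\setminus\Omega_t}|\nabla u|^n=\frac{1}{(t-s)^n}\int_s^t F(\tau)\dd\tau\le \frac{F_\infty}{(t-s)^{n-1}}.
\end{align*}
By monotonicity of capacity, this also bounds $Cap_n(B(p,\rho),B(p,e^{-s/K}))$ from above.

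\emph{Lower bound from the geometry.} Here we use $\mathrm{AVR}>0$. Brendle's sharp isoperimetric inequality \cite{Bre23} gives
\[
\text{Area}(\partial E)\ge c_0\,Vol(E)^{\frac{n-1}{n}},\qquad c_0=n\bigl(|\mathbb{B}^n|\mathrm{AVR}\bigr)^{1/n}>0 .
\]
We feed this into the standard Maz'ya level-set estimate. For a test function $\varphi$ with superlevel volumes $v(\tau)=Vol\{\varphi>\tau\}$, combining coarea with H\"older gives
\[
\int|\nabla\varphi|^n\ge\left(\int_{|K|}^{|\Omega|}\frac{\dd v}{I(v)^{n/(n-1)}}\right)^{1-n},
\]
where $I$ is the isoperimetric profile. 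With $I(v)\ge c_0v^{(n-1)/n}$, the integral is at most $c_0^{-n/(n-1)}\log(|\Omega|/|K|)$. Bishop--Gromov gives $Vol(B(p,R))\le|\mathbb{B}^n|R^n$, and for fixed $\rho$ we have $Vol(B(p,\rho))>0$. Hence
\[
Cap_n(B(p,\rho),B(p,R))\ge c_1\,(\log R)^{1-n}\quad\text{for }R\gg1,
\]
with $c_1>0$ depending only on $n$ and $\mathrm{AVR}$.

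\emph{Conclusion.} Take $R=e^{-s/K}$, so $\log R=-s/K$. The two bounds give
\[
c_1K^{n-1}(-s)^{1-n}\le F_\infty (t-s)^{1-n}.
\]
Fix $t=t_0$ and let $s\to-\infty$; then $(t-s)/(-s)\to1$, so $c_1K^{n-1}\le F_\infty$. Since $K$ is arbitrary, this contradicts $F_\infty<+\infty$. Hence either $\int_M e^u=+\infty$, or $\int_Me^u<+\infty$ and $\mathrm{AVR}=0$.

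The step I expect to need the most care is the capacity lower bound. One has to run the Maz'ya/P\'olya--Szeg\H{o} level-set argument on a manifold using only Brendle's inequality for possibly nonsmooth superlevel sets, which I would handle by approximating with smooth test functions. Only $c_1>0$ matters, not its sharp value, which keeps this step robust. The limsup (rather than liminf) hypothesis is used exactly to guarantee that $\Omega_s$ is contained in a ball of radius $e^{-s/K}$.
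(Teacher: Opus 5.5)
Your argument is correct, but it takes a different route from the paper's. The paper first proves \cref{lem. lower bound of L1 norm} directly. On the boundary of the bounded sets $U_t=\{G-u<t\}$, with $G=-(\gamma-2\epsilon)\max\{\log r,0\}$, it compares conormal derivatives to get $|\nabla u|^{n-2}\partial_\nu u\le|\nabla G|^{n-2}\partial_\nu G$. It then uses $\Delta_n\log r\le 0$ to push the flux of $G$ out to a large sphere, where the flux equals $(\gamma-2\epsilon)^{n-1}\mathrm{Area}(\partial B(p,\rho_t))/\rho_t^{n-1}\to(\gamma-2\epsilon)^{n-1}|\mathbb{S}^{n-1}|\mathrm{AVR}(M^n,g)$. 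The theorem follows by letting $\gamma\to+\infty$, with no contradiction argument.

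Your proof is instead a capacity sandwich, dual to the paper's proof of \cref{prop. finite volume}. There, the geometry supplies the capacity upper bound (the radial test function) and the equation supplies the lower bound (\cref{lem. upper bound of total curvature}). For you, the equation supplies the upper bound through truncations of $u$ between two level sets, and Brendle's isoperimetric inequality fed into Maz'ya's level-set argument supplies the lower bound.

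The paper's route is more elementary, using only Laplacian comparison and Bishop--Gromov, and it works for sign-changing sources $f$. Yours needs $e^u>0$ (to bound $\int_{\Omega_\tau}e^u$ by $\int_M e^u$) and a much deeper theorem. In exchange, it only ever uses level sets of the $C^3$ function $u$. This sidesteps regularity issues that the paper's argument must handle: $r$ is not smooth on the cut locus and $G-u$ is only Lipschitz, yet the divergence theorem is applied on $U_t$.

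Two small remarks. First, you can bypass the flux identity and the appeal to Sard (which is not automatic for a $C^3$ function when $n\ge 4$) by testing the equation with $(\min\{u,t\}-s)_+$. This gives $\int_{\{s<u<t\}}|\nabla u|^n=\int_M e^u(\min\{u,t\}-s)_+\le(t-s)\int_M e^u$ directly. Second, if you track constants, $c_0^n=n^{n-1}|\mathbb{S}^{n-1}|\mathrm{AVR}(M^n,g)$ together with $\log\bigl(Vol(B(p,R))/Vol(B(p,\rho))\bigr)\le n\log R+O(1)$ gives $Cap_n(\overline{B(p,\rho)},B(p,R))\ge|\mathbb{S}^{n-1}|\mathrm{AVR}(M^n,g)\bigl(\log R+O(1)\bigr)^{1-n}$. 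So your method in fact recovers the paper's sharp bound $\int_M e^u\ge|\mathbb{S}^{n-1}|\mathrm{AVR}(M^n,g)\,K^{n-1}$ for positive sources.
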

\begin{remark}
    Note that the example \cref{example vanishing AVR} indeed falls within the scope of \cref{thm. converse for higher dimensional}.
However, the complexity of the asymptotic behavior of solutions relevant to \cref{conj. converse of prop} goes far beyond the scope of this result.
    Indeed, the solution constructed in \cref{example mine} satisfies 
\begin{align*}
    \liminf_{r(x)\to+\infty}
    \frac{u(x)}{\log r(x)}=-\infty,\qquad
    \limsup_{r(x)\to+\infty} \frac{u(x)}{\log r(x)}=
    \begin{cases}
        +\infty &\text{if }n=2,\\
        0 &\text{if }n\geq 3.
    \end{cases}
\end{align*}

\end{remark}

The proof of \cref{thm. converse for higher dimensional} is based on the following general lemma:
\begin{lemma}\label{lem. lower bound of L1 norm}
    Let $(M^n,g), n\geq 2$, be a complete noncompact manifold with nonnegative Ricci curvature. Assume $u\in C^3(M^n)$ is a solution to
    \begin{align*}
        -\Delta_n u=f,\quad
        \text{on }M^n,
    \end{align*}
    for some function $f\in L^{\infty}_{\text{loc}}(M)$.
    Assume that
    \begin{align}\label{limsup assumption}
        \limsup_{r(x)\to+\infty}
        \frac{u(x)}{\log r(x)}
        \leq -\gamma<0
    \end{align}
    for some positive constant $\gamma$, then
    \begin{align*}
        \int_{M^n}f
        \geq |\mathbb{S}^{n-1}|
        \mathrm{AVR}(M^n,g)\gamma^{n-1},
    \end{align*}
    where $\mathrm{AVR}(M^n,g)$ is defined in \cref{def. AVR}.
\end{lemma}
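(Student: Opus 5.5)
The plan is to compare two bounds for the $n$-capacity of a condenser formed by two superlevel sets of $u$. The upper bound comes from the flux of the equation. The lower bound comes from the geometry, via Bishop--Gromov along geodesic rays. This mirrors the proof of \cref{prop. finite volume} with the roles of the two factors reversed.

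\textbf{Setup.} For $t\in\mathbb{R}$ set $\Omega_t:=\{u>t\}$ and $F(t):=\int_{\Omega_t}f$. Fix $\epsilon\in(0,\gamma)$. By \cref{limsup assumption}, $u\le -(\gamma-\epsilon)\log r$ for $r\ge R_\epsilon$. Hence every $\Omega_t$ with $t\ll -1$ is relatively compact, and
\begin{align*}
\Omega_t\subset B(p,R(t)),\qquad R(t):=e^{-t/(\gamma-\epsilon)}.
\end{align*}
The sets $\Omega_t$ increase and exhaust $M$ as $t\to-\infty$. I interpret $\int_M f$ as $\lim_{t\to-\infty}F(t)$ along this exhaustion; the case $+\infty$ is trivial. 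For regular values $t$ (Sard), the divergence theorem gives
\begin{align*}
F(t)=\int_{\partial\Omega_t}|\nabla u|^{n-1}.
\end{align*}

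\textbf{Step 1: upper bound on capacity.} Fix $R_1$ and put $t_1:=\min_{\overline{B(p,R_1)}}u-1$, so $B(p,R_1)\subset\Omega_{t_1}$. For $t_2<t_1$, use the truncation $\varphi:=\min\{1,\max\{0,(u-t_2)/(t_1-t_2)\}\}$, which is admissible for $Cap_n(\overline{B(p,R_1)},\Omega_{t_2})$. By the coarea formula,
\begin{align*}
\int|\nabla\varphi|^n=(t_1-t_2)^{-n}\int_{t_2}^{t_1}\int_{\partial\Omega_s}|\nabla u|^{n-1}\,ds=(t_1-t_2)^{-n}\int_{t_2}^{t_1}F(s)\,ds .
\end{align*}
Since $\Omega_{t_2}\subset B(p,R(t_2))$, monotonicity of capacity in the outer set yields
\begin{align*}
Cap_n(B(p,R_1),B(p,R(t_2)))\le (t_1-t_2)^{-(n-1)}\,\frac{1}{t_1-t_2}\int_{t_2}^{t_1}F(s)\,ds .
\end{align*}

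\textbf{Step 2: lower bound on capacity (the main obstacle).} I want
\begin{align*}
Cap_n(B(p,R_1),B(p,R_2))\ge |\mathbb{S}^{n-1}|\,\mathrm{AVR}(M^n,g)\,(\log R_2-\log R_1)^{-(n-1)}.
\end{align*}
Work in geodesic polar coordinates at $p$ with volume density $J(r,\theta)$, set to $0$ beyond the cut point. By Bishop--Gromov, $J(r,\theta)/r^{n-1}$ is nonincreasing in $r$. Its limit $a(\theta)$ satisfies $\int_{\mathbb{S}^{n-1}}a(\theta)\,d\theta=|\mathbb{S}^{n-1}|\mathrm{AVR}(M^n,g)$, and $a(\theta)=0$ for directions whose rays stop minimizing. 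For each $\theta$ with $a(\theta)>0$, any admissible $\varphi$ satisfies $1\le\int_{R_1}^{R_2}|\partial_r\varphi|\,dr$. Hölder's inequality then gives
\begin{align*}
1\le\Big(\int_{R_1}^{R_2}|\partial_r\varphi|^nJ\,dr\Big)\Big(\int_{R_1}^{R_2}J^{-\frac{1}{n-1}}dr\Big)^{n-1},
\end{align*}
and $J\ge a(\theta)r^{n-1}$ bounds the second factor by $a(\theta)^{-1}(\log(R_2/R_1))^{n-1}$. Integrating in $\theta$ and using $|\nabla\varphi|\ge|\partial_r\varphi|$ gives the claim. The delicate points are the cut locus, the measurability of $a$, and justifying the identity $\int a=|\mathbb{S}^{n-1}|\mathrm{AVR}$ by monotone convergence of $J/r^{n-1}$. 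I expect to spend most of the effort here.

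\textbf{Step 3: conclusion.} Take $R_2=R(t_2)$, so $\log R_2=-t_2/(\gamma-\epsilon)$. Combining Steps 1 and 2,
\begin{align*}
\frac{1}{t_1-t_2}\int_{t_2}^{t_1}F(s)\,ds\ \ge\ |\mathbb{S}^{n-1}|\,\mathrm{AVR}(M^n,g)\left(\frac{t_1-t_2}{-t_2/(\gamma-\epsilon)-\log R_1}\right)^{n-1}.
\end{align*}
Let $t_2\to-\infty$. The right side tends to $|\mathbb{S}^{n-1}|\mathrm{AVR}(M^n,g)(\gamma-\epsilon)^{n-1}$. The left side is a Cesàro-type average of $F$, and $F(s)\to\int_Mf$ as $s\to-\infty$, so the left side converges to $\int_Mf$. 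Finally let $\epsilon\to0$.
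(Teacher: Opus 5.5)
Your proof is correct, but it follows a genuinely different route from the paper's.

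The paper uses a barrier. With $G=-(\gamma-2\epsilon)\max\{\log r,0\}$, it exhausts $M$ by the bounded sets $U_t=\{G-u<t\}$. A three-case analysis gives the conormal comparison $|\nabla u|^{n-2}\partial_\nu u\le|\nabla G|^{n-2}\partial_\nu G$ on $\partial U_t$, so $\int_{U_t}f$ dominates the outward $n$-flux of $(\gamma-2\epsilon)\log r$ through $\partial U_t$. It then uses $\Delta_n\log r\le 0$ to push this flux out to a large sphere $\partial B(p,\rho_t)$, where Bishop--Gromov bounds it below by $(\gamma-2\epsilon)^{n-1}|\mathbb{S}^{n-1}|\mathrm{AVR}(M^n,g)$.

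You instead squeeze an $n$-capacity between the energy of truncations of $u$ and a geometric lower bound. In effect you run the proof of \cref{prop. finite volume} with the roles of the two factors exchanged; your ray estimate $J(r,\theta)\ge a(\theta)r^{n-1}$ is the integrated form of $\Delta_n\log r\le 0$.

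The paper's argument is shorter and gives a flux inequality at each level without averaging in $t$. Yours needs no barrier, hence no boundary case analysis. It also avoids the regularity of $\partial U_t$ and of $\log r$ across the cut locus, points the paper passes over quickly. As a by-product it gives the sharp bound $Cap_n(B(p,R_1),B(p,R_2))\ge|\mathbb{S}^{n-1}|\mathrm{AVR}(M^n,g)(\log R_2-\log R_1)^{-(n-1)}$. Together with \cref{eq. ingredient 1}, this determines the logarithmic asymptotics of the capacity of concentric balls and makes the duality between the two lemmas explicit.

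Two small streamlinings:
\begin{itemize}
\item Sard's theorem for $u\in C^3$ only covers $n\le 3$. Rather than the level-set identity $F(s)=\int_{\partial\Omega_s}|\nabla u|^{n-1}$, test the equation directly with your $\varphi$. This gives $\int_M f\varphi=(t_1-t_2)^{n-1}\int_M|\nabla\varphi|^n$, and $\int_M f\varphi=\frac{1}{t_1-t_2}\int_{t_2}^{t_1}F(s)\,ds$ by Fubini.
\item This $\varphi$ is Lipschitz, equal to $1$ on $\overline{B(p,R_1)}$ and to $0$ on $\partial B(p,R(t_2))$. So the ray argument of Step 2 applies to it directly, and no capacity or approximation is needed.
\end{itemize}

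Finally, as in the paper, for signed $f$ the quantity $\int_M f$ must be read as a limit along an exhaustion. If $\lim_{s\to-\infty}F(s)$ does not exist, your argument still yields the bound for $\limsup_{s\to-\infty}F(s)$.
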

\begin{proof}
    \cref{limsup assumption} yields that,
    for any fixed $\epsilon\in (0,\frac{\gamma}{2})$,  there exists $R_\epsilon>0$, such that
    \begin{align*}
        u(x)\leq (-\gamma+\epsilon)\log r(x),\quad 
        \forall r(x)>R_\epsilon.
    \end{align*}
    Consider the function
    \begin{align*}
        G(x):=-(\gamma-2\epsilon)
        \max\{\log r(x),0\},
    \end{align*}
    and the sets
    \begin{align*}
        U_t:=\{x\in M: G(x)-u(x)<t\},\quad \forall t>1.
    \end{align*}
    We first note that  each $U_t$ is a bounded domain, since for any $ r(x)>\max\{{\exp(\frac{t}{\epsilon})}, R_\epsilon, e\}$, there holds
    \begin{align*}
        G(x)
        =(-\gamma+2\epsilon)\log r(x)
        > (-\gamma+\epsilon)\log r(x)+t
        \geq u(x)+t.
    \end{align*}
    Therefore,
    \begin{align}\label{Ut is bounded}
        U_t\subset B(p,\rho_t),\quad 
        \text{where }\rho_t:=
        \max\left\{
        {\exp\left({\frac{t}{\epsilon}}\right)},R_\epsilon,\ e
        \right\}.
    \end{align}
    We also note that $\{U_t\}_{t>1}$ is an exhaustion of $M^n$. In fact, by the definition of $G$ and $U_t$, for any $R>0$, we have
    \begin{align*}
        B(p,R)\subset U_t,\qquad \forall 
        t>\sup_{B(p,R)}(-u).
    \end{align*}
    We claim that
    \begin{align}\label{ineq. comparison of normal derivatives}
        |\nabla u|^{n-2}\frac{\partial u}{\partial\nu}
        \leq |\nabla G|^{n-2}\frac{\partial G}{\partial\nu}\quad 
        \text{on }\partial U_t,
    \end{align}
    where $\nu$ is the unit outer normal vector on $\partial U_t$. First, by the definition of $U_t$, we know that
    \begin{align*}
        \frac{\partial u}{\partial \nu}
        \leq \frac{\partial G}{\partial\nu}
        \quad \text{on }\partial U_t.
    \end{align*}
    Now we shall verify \cref{ineq. comparison of normal derivatives} at any point $q\in\partial U_t$ in three cases separately.
    
    If $\frac{\partial u}{\partial\nu}\geq 0$ at $q$, noticing that $\nabla_{\partial U_t}u=\nabla_{\partial U_t}G$ at $q$, we get
    \begin{align*}
        |\nabla u|^2
        =|\nabla_{\partial U_t}u|^2
        +\left(\frac{\partial u}{\partial\nu}\right)^2
        \leq 
        |\nabla_{\partial U_t}G|^2
        +\left(\frac{\partial G}{\partial\nu}\right)^2
        =|\nabla G|^2.
    \end{align*}
    Hence \cref{ineq. comparison of normal derivatives} holds in this case.

    If $\frac{\partial u}{\partial\nu}<0\leq \frac{\partial G}{\partial\nu}$ at $q$, then \cref{ineq. comparison of normal derivatives} holds trivially.

    If $\frac{\partial u}{\partial\nu}
    \leq \frac{\partial G}{\partial\nu}<0$ at $q$, using $\nabla_{\partial U_t}u=\nabla_{\partial U_t}G$ at $q$, we get
    \begin{align*}
        |\nabla u|^2
        =|\nabla_{\partial U_t}u|^2
        +\left(\frac{\partial u}{\partial\nu}\right)^2
        \geq 
        |\nabla_{\partial U_t}G|^2
        +\left(\frac{\partial G}{\partial\nu}\right)^2
        =|\nabla G|^2.
    \end{align*}
    Then \cref{ineq. comparison of normal derivatives} still holds.

    Using   the divergence theorem, \cref{ineq. comparison of normal derivatives}, \cref{Ut is bounded} and the Laplace comparison theorem $\Delta_n\log r\leq 0$, we derive that
    \begin{align*}
        &\int_{U_t}f
        =-\int_{U_t}\Delta_n u
        =-\int_{\partial U_t}|\nabla u|^{n-2}\frac{\partial u}{\partial\nu}\\
        \geq& -\int_{\partial U_t}
        |\nabla G|^{n-2}\frac{\partial G}{\partial\nu}
        =\int_{\partial U_t}
        (\gamma-2\epsilon)^{n-1}
        |\nabla \log r|^{n-2}
        \frac{\partial\log r}{\partial\nu}\\
        =&(\gamma-2\epsilon)^{n-1}
        \int_{U_t\setminus B(p,\epsilon)}
        \Delta_n\log r
        +(\gamma-2\epsilon)^{n-1}
        \int_{\partial B(p,\epsilon)}
        |\nabla \log r|^{n-2}
        \frac{\partial\log r}{\partial\nu}\\
        \geq& (\gamma-2\epsilon)^{n-1}
        \int_{B(p,\rho_t)\setminus B(p,\epsilon)}
        \Delta_n\log r
        +(\gamma-2\epsilon)^{n-1}
        \int_{\partial B(p,\epsilon)}
        |\nabla \log r|^{n-2}
        \frac{\partial\log r}{\partial\nu}\\
        =&(\gamma-2\epsilon)^{n-1}
        \int_{\partial B(p,\rho_t)}
        |\nabla \log r|^{n-2}
        \frac{\partial\log r}{\partial\nu}\\
        =&(\gamma-2\epsilon)^{n-1}
        \frac{\mathrm{Area}(\partial B(p, \rho_t))}{\rho_t^{n-1}}.
    \end{align*}
    Taking $t$ to $+\infty$, it follows that
    \begin{align*}
        \int_M f
        \geq (\gamma-2\epsilon)^{n-1}
        |\mathbb{S}|^{n-1}
        \mathrm{AVR}(M^n,g).
    \end{align*}
    Then the proof finishes by taking $\epsilon$ to $0$.
\end{proof}
\begin{remark}
    Observe that we have not imposed any positivity assumption on the source function $f$, yet we have derived a positive lower bound for its integral over $M$.
    In this light, the logarithmic upper bound \cref{limsup assumption} also reveals an intriguing connection to the integral of the source term $f$.
\end{remark}
Furthermore, a similar argument to that in the proof of \cref{lem. lower bound of L1 norm} yields the following estimate, which likewise does not require nonnegativity of the source function and therefore improves upon \cref{lem. upper bound of L1 norm}.
\begin{lemma}\label{lem. upper bound of L1 norm_second}
    Let $(M^n,g), n\geq 2$, be a complete noncompact manifold with nonnegative Ricci curvature. Assume $u\in C^3(M^n)$ is a solution to
    \begin{align*}
        -\Delta_n u=f,\quad
        \text{on }M^n,
    \end{align*}
    for some function $f\in L^{\infty}_{\text{loc}}$. Assume 
    \begin{align*}
        \liminf_{r(x)\to+\infty}
        \frac{u(x)}{\log r(x)}\geq -\gamma,
    \end{align*}
    for some positive constant $\gamma>0$, then there holds
    \begin{align*}
        \int_M f
        \leq |\mathbb{S}^{n-1}|\mathrm{AVR}(M^n,g)\, \gamma^{n-1}<+\infty,
    \end{align*}
    where $\mathrm{AVR}(M^n,g)$ is defined in \cref{def. AVR}.
\end{lemma}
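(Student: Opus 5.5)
The plan is to mirror the proof of \cref{lem. lower bound of L1 norm}, reversing the roles of $u$ and the comparison function. Fix $\epsilon\in(0,\gamma)$. By the liminf assumption there is $R_\epsilon>e$ with $u(x)\geq -(\gamma+\epsilon)\log r(x)$ for $r(x)>R_\epsilon$. Set
\[
G(x):=-(\gamma+2\epsilon)\max\{\log r(x),0\},\qquad U_t:=\{x\in M:\ u(x)-G(x)<t\}.
\]
For $r(x)>\max\{e^{t/\epsilon},R_\epsilon\}$ we have $u-G\geq \epsilon\log r>t$. Hence $U_t\subset B(p,\rho_t)$ with $\rho_t:=\max\{e^{t/\epsilon},R_\epsilon,e\}$, and since $u-G\leq \sup_{B(p,R)}u$ on $B(p,R)$, the family $\{U_t\}$ exhausts $M$. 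If $f$ is not known to be integrable, I would interpret $\int_M f$ as $\lim_t\int_{U_t}f$, or first show $f^+$ is controlled. I expect the intended reading is that the estimate holds along this exhaustion. Choosing $t$ to be a regular value (Sard, using $u\in C^3$ and $G$ smooth away from the cut locus, handled as in the previous lemma) makes $\partial U_t$ a level set of $u-G$ with outer normal $\nu$, on which $\partial_\nu u\geq \partial_\nu G$.

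The key pointwise step is the reversed flux comparison
\[
|\nabla u|^{n-2}\partial_\nu u\ \geq\ |\nabla G|^{n-2}\partial_\nu G\quad\text{on }\partial U_t .
\]
It follows from the same three-case analysis as \cref{ineq. comparison of normal derivatives}. If $\partial_\nu G\geq 0$, then $|\nabla u|\geq|\nabla G|$ because the tangential gradients agree, so the inequality holds. If $\partial_\nu G<0\leq\partial_\nu u$, it holds trivially. If $\partial_\nu G\leq\partial_\nu u<0$, then $|\nabla u|\leq|\nabla G|$, so it holds again. In fact the monotonicity of $\xi\mapsto|\xi|^{n-2}\xi$ along the normal direction with a fixed tangential part gives all three at once.

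Next I integrate. The divergence theorem gives
\[
\int_{U_t}f=-\int_{\partial U_t}|\nabla u|^{n-2}\partial_\nu u\ \leq\ (\gamma+2\epsilon)^{n-1}\int_{\partial U_t}|\nabla\log r|^{n-2}\partial_\nu\log r .
\]
I then convert the right side to $\int_{\partial B(p,\rho)}$ for a large sphere. Here the direction of the inequality matters. Since $\Delta_n\log r\leq 0$ distributionally away from $p$ by Laplace comparison, flux is nonincreasing outward. I therefore need a sphere \emph{inside} $U_t$ rather than one containing it. So I use $B(p,s_t)\subset U_t$ with $s_t\to\infty$, which holds whenever $t>\sup_{B(p,s_t)}(u-G)$. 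Excising a small ball around $p$ exactly as before, I compare the flux through $\partial U_t$ with that through $\partial B(p,s_t)$, applying the divergence theorem on $U_t\setminus B(p,s_t)$ and using $\Delta_n\log r\leq0$ there. This yields
\[
\int_{U_t}f\leq(\gamma+2\epsilon)^{n-1}\frac{\mathrm{Area}(\partial B(p,s_t))}{s_t^{\,n-1}}.
\]

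Finally I let $t\to\infty$, so $s_t\to\infty$. Since $\mathrm{Area}(\partial B(p,s))/s^{n-1}\to|\mathbb{S}^{n-1}|\mathrm{AVR}(M^n,g)$ by Bishop--Gromov, this gives $\limsup_t\int_{U_t}f\leq|\mathbb{S}^{n-1}|\mathrm{AVR}(M^n,g)(\gamma+2\epsilon)^{n-1}$, and then I let $\epsilon\to0$. The main obstacle is this flux transfer, together with the identification of $\int_M f$. The transfer requires the distributional inequality $\Delta_n\log r\leq0$ across the cut locus with the correct sign on the annular region $U_t\setminus B(p,s_t)$; this is the same technical point the previous proof used implicitly. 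The identification of $\int_M f$ without sign information on $f$ should be understood as the limit along the exhaustion $U_t$, or, when $f\geq0$, by monotone convergence.
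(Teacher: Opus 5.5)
Your proposal is correct and is exactly the adaptation the paper intends (it only says the argument of \cref{lem. lower bound of L1 norm} carries over): you reverse the flux comparison on $\partial U_t$ via monotonicity of $\xi\mapsto|\xi|^{n-2}\xi$, and make the one genuine change of comparing with an inner ball $B(p,s_t)\subset U_t$, which is needed because $\Delta_n\log r\le 0$ makes the flux of $\log r$ nonincreasing outward. Two small remarks: first, on $B(p,R)$ you only have $u-G\le\sup_{B(p,R)}u+(\gamma+2\epsilon)\max\{\log R,0\}$, not $u-G\le\sup_{B(p,R)}u$, but this still gives the exhaustion; second, your caveat about $\int_M f$ is substantive, since for $u=\sin(\log|x|)$ near infinity on $\mathbb{R}^2$ the hypothesis holds for every $\gamma>0$ while $\int_{B(0,R)}f=-2\pi\cos(\log R)$ oscillates between $\pm 2\pi$, so the statement needs an extra condition such as $f^-\in L^1(M)$, under which your $\limsup$ bound along $U_t$ does give the conclusion.
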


Now it's very easy to establish \cref{thm. converse for higher dimensional}.
\begin{proof}[Proof of \cref{thm. converse for higher dimensional}]
    By \cref{lem. lower bound of L1 norm} and our assumption, we know that
    \begin{align*}
        \int_{M^n}e^u
        \geq |\mathbb{S}^{n-1}|
        \mathrm{AVR}(M^n,g)\gamma^{n-1},\quad \forall \gamma>0.
    \end{align*}
    Letting $\gamma$ go to $+\infty$, we conclude \cref{two cases in conj}.
\end{proof}


\bibliographystyle{abbrv} 
\bibliography{references}
\end{document}